\tikzset{middlearrow/.style={
		decoration={markings,
			mark= at position 0.5 with {\arrow{#1}} ,
		},
		postaction={decorate}
	}
}
\DeclareMathOperator*{\Hess}{Hess}
\DeclareMathOperator*{\argmin}{arg\,min}
\journalname{Information Geometry}
\begin{document}

\title{Pseudo-Riemannian geometry embeds information geometry in optimal transport\thanks{The research of T.-K.~L.~Wong is partially supported by NSERC Grant RGPIN-2019-04419 and the Connaught New Researcher Award. J.~Yang  was partially supported by a Simons Investigator Award and NSF grant CCF-1815254.}
}

\titlerunning{Embedding information geometry in optimal transport}        

\author{Ting-Kam Leonard Wong         \and
        Jiaowen Yang 
}


\institute{T.-K.~L.~Wong \at
              Department of Statistical Sciences, University of Toronto \\
              \email{tkl.wong@utoronto.ca}           
           \and
           J.~Yang \at Facebook \\
           \email{jiaowen@fb.com}
}

\date{Received: date / Accepted: date}

\maketitle

\begin{abstract}
Optimal transport and information geometry both study geometric structures on spaces of probability distributions. Optimal transport characterizes the cost-minimizing movement from one distribution to another, while information geometry originates from coordinate-invariant properties of statistical inference. Their connections and applications in statistics and machine learning have started to gain more attention. In this paper we give a new differential geometric connection between the two fields. Namely, the pseudo-Riemannian framework of Kim and McCann, a geometric perspective on the fundamental Ma-Trudinger-Wang (MTW) condition in the regularity theory of optimal transport maps, encodes the dualistic structure of statistical manifold. This general relation is described using the natural framework of $c$-divergence, a divergence defined by an optimal transport map. As a by-product, we obtain a new information-geometric interpretation of the MTW tensor. This connection sheds light on old and new aspects of information geometry. The dually flat geometry of Bregman divergence corresponds to the quadratic cost and the pseudo-Euclidean space, and the $L^{(\alpha)}$-divergence introduced by Pal and the first author has constant sectional curvature in a sense to be made precise. In these cases we give a geometric interpretation of the information-geometric curvature in terms of the divergence between a primal-dual pair of geodesics.
\keywords{Optimal transport \and Information geometry \and Pseudo-Riemannian geometry \and $c$-divergence \and $L^{(\alpha)}$-divergence \and Bregman divergence \and Ma-Trudinger-Wang tensor}
\end{abstract}

\section{Introduction}

Let $\mu$ and $\nu$ be Borel probability measures on Polish spaces $M$ and $M'$ respectively. Given a real-valued cost function $c$ defined on $M \times M'$, the Monge-Kantorovich optimal transport problem is
\begin{equation} \label{eqn:MK.problem}
	\mathcal{T}_c(\mu, \nu) := \inf_{\gamma \in \Pi(\mu, \nu)} \int_{M \times M'} c d \gamma,
\end{equation}
where $\Pi(\mu, \nu)$ is the set of probability measures on $M \times M'$ whose first and second marginals are $\mu$ and $\nu$ respectively. For systematic mathematical expositions of optimal transport theory we refer the reader to the textbooks \cite{V03, V08, S15}. Under suitable conditions on the cost function $c$ and the measures $\mu, \nu$, the optimal coupling $\gamma^*$ can be shown to be deterministic, i.e., $\gamma^*$ concentrates on the graph $G \subset M \times M'$ of a measurable map $f: M \rightarrow M'$, called the optimal transport map. The transport cost $\mathcal{T}_c$ may be regarded as a lift of the cost $c$ from $M \times M'$ to $\mathcal{P}(M) \times \mathcal{P}(M')$, where $\mathcal{P}(M)$ and $\mathcal{P}(M')$ are spaces of probability measures on $M$ and $M'$ respectively. For example, when $M = M'$ and $c(x, y) = d(x, y)^p$, where $d$ is a metric and $p \geq 1$, then $\mathcal{T}_c^{1/p}$ is the Wasserestein metric of order $p$. Optimal transport has deep and elegant connections with probability, geometry and analysis. Thanks to recent breakthroughs in algorithmic development, optimal transport was also shown to be remarkably useful in statistical and machine learning applications \cite{PC19}.

In this paper we let $M$ and $M'$ be $n$-dimensional smooth manifolds ($n \geq 2$). In many cases of interest these are open domains in Euclidean space. Also we assume that $c: M \times M' \rightarrow \mathbb{R}$ is smooth. Following the geometric approach of Kim and McCann \cite{KM10} and McCann \cite{M12}, consider the {\it cross-difference}
\begin{equation} \label{eqn:cross.difference}
	\delta(p, q', p_0, q_0') := c(p, q_0') + c(p_0, q') - c(p, q') - c(p_0, q_0')
\end{equation}
defined for pairs $(p, q'), (p_0, q_0')$ of the product space $M \times M'$. Intuitively, it compares the costs of the matching $(p \rightarrow q', p_0 \rightarrow q_0')$ with that of $(p \rightarrow q_0', p_0 \rightarrow q')$. Note that if both $(p, q')$ and $(p_0, q_0')$ belong to the graph $G$ of an optimal transport map (for some pair $(\mu, \nu)$), by the $c$-cyclical monotonicity of $G$ we have $\delta(p, q', p_0, q_0') \geq 0$. In \cite{KM10, M12} it was shown that this cross-difference defines a pseudo-Riemannian geometry on $M \times M'$ which controls the geometry -- including the regularity -- of the optimal coupling. The pseudo-Riemannian metric (a non-degenerate $2$-tensor), given by $h = \frac{1}{2} \Hess \delta$, has signature $(n, n)$ where the Hessian is taken with respect to the $2n$-dimensional variable $(p, q')$. The details of this construction are recalled in Section \ref{sec:pseudo.Riemannian}. In this paper we show that the pseudo-Riemannian framework also encodes the dualistic structure of statistical manifold, and use this connection to elucidate several aspects of information geometry. 

Before describing the connection with information geometry, let us discuss briefly the context of the papers \cite{KM10, M12} in more detail. A fundamental problem in optimal transport is to study when the Monge-Kantorovich problem \eqref{eqn:MK.problem} admits a Monge (i.e., deterministic) solution, and, if so, the regularity (continuity and smoothness) of the optimal transport map. While existence of a Monge solution is implied by a twist condition on the cost function, regularity involves analyzing nonlinear partial differential equations of which the Monge-Amp\`{e}re equation is a classic example. Significant progress was achieved by Ma, Trudinger and Wang \cite{MTW05} and Trudinger and Wang \cite{TW09} who introduced a fourth order differential condition on the cost $c$ and showed that it is sufficient for continuity and regularity estimates of the transport map (under suitable conditions on $\mu$ and $\nu$  and their supports). Loeper \cite{L09} showed that this condition is also necessary and also observed that for a Riemannian manifold $M$ with $c(x, y) = \frac{1}{2} d^2(x, y)$, the Ma-Trudinger-Wang (MTW) tensor on the diagonal of $M \times M$ is proportional to the Riemannian sectional curvature (see Example \ref{ex:Loeper}). Kim and McCann \cite{KM10} showed that these results correspond to conditions on the Riemann curvature tensor in their pseudo-Riemannian framework (see Remark \ref{rmk:MTW}). Further progress, including a characterization of the graph $G$ in terms of a calibration form \cite{KMW10}, is surveyed in \cite{M12}. For more details about the regularity of optimal transport maps we refer the reader to \cite[Chapter 12]{V08}, \cite[Section 5]{KM10} as well as \cite{GA14}.

\subsection{Summary of main results} \label{sec:summary}
To describe the connection between the pseudo-Riemannian framework and information geometry, we first recall the concept of divergence which is a fundamental concept in information geometry \cite{A16, AJVS17}. For simplicity we assume that the objects considered (manifolds, functions, etc) are all smooth.

\begin{definition}[Divergence] \label{def:divergence}
	Let $M$ be an $n$-dimensional manifold. A divergence on $M$ is a function $\mathbf{D} : M \times M \rightarrow [0, \infty)$ such that the following properties hold:
	\begin{enumerate}
		\item[(i)] ${\bf D}[ p : p'] = 0$ only if $p = p'$.
		\item[(ii)] In local coordinates $(\xi^1, \ldots, \xi^n)$, we have
		\begin{equation} \label{eqn:divergence.local.quadratic}
			{\bf D}[ \xi + \Delta \xi : \xi] = \frac{1}{2} g_{ij}(\xi) \Delta \xi^i \Delta \xi^j + O(|\Delta \xi|^3),
		\end{equation}
		where $(g_{ij}(\xi))$ is strictly positive definite and varies smoothly in $\xi$. Note that $g$ defines a Riemannian metric on $M$.
	\end{enumerate}
\end{definition}

In \eqref{eqn:divergence.local.quadratic} and throughout the paper the Einstein summation convention is used. The classic example is where $M$ is a finite-dimensional family $\{p(\cdot ; \xi)\}$ of parameterized probability densities (such as an exponential family, see \cite[Chapter 2]{A16}), and ${\bf D}$ is the Kullback-Leibler divergence (relative entropy). Then the corresponding Riemannian metric is the Fisher information metric given by
\[
g_{ij}(\xi) = \int \frac{\partial \log p}{\partial \xi^i}(x ; \xi) \frac{\partial \log p}{\partial \xi^j}(x ; \xi) p(x ; \xi) dx.
\]
Following Eguchi \cite{E83}, a divergence induces a dualistic structure $(g, \nabla, \nabla^*)$ on $M$ consisting of a Riemannian metric $g$, as in \eqref{eqn:divergence.local.quadratic}, and a pair $(\nabla, \nabla^*)$ of torsion-free affine connections which are dual with respect to the metric $g$ (the expressions are given in Section \ref{sec:coeff.prelim}). The connections are defined in terms of third order derivatives of ${\bf D}[ \cdot : \cdot]$ on the diagonal (which may be regarded as the graph $G = \{ (x, x): x \in M\}$ of the identity map $f(x) = x$, see Example \ref{eg:any.divergence} below), and the duality means that for any vector fields $X$, $Y$ and $Z$ we have the following extension of the metric property:
\begin{equation*} 
	Z g( X, Y) = g( \nabla_Z X, Y ) + g( X, \nabla_Z^* Y ).
\end{equation*}
Note that the average $\frac{1}{2} (\nabla + \nabla^*)$ of $\nabla$ and $\nabla^*$ is the Levi-Civita connection of $g$.\footnote{Equivalently, a dualistic structure (also called a statistical manifold) can be defined by $(M, g, T)$, where $T = \nabla - \nabla^*$ can be interpreted as a symmetric $3$-tensor \cite{L87}. It can be shown that for any dualistic structure there exists a divergence which induces it.} When ${\bf D}$ is a Bregman divergence the induced dualistic structure is dually flat, i.e., both $\nabla$ and $\nabla^*$ are flat \cite{NA82}. Moreover, the two affine coordinate systems are related by a Legendre transformation \cite[Chapter 6]{A16}. In the context of exponential family, this reduces to the convex duality between the canonical and expectation parameters. Dual connections also appear in the context of affine differential geometry \cite{NS94}. There are other geometric structures in information geometry, including statistical manifolds admitting torsion \cite{HM19}, quantum information geometry \cite{ZGC07} as well as infinite dimensional statistical manifolds \cite{PS95}, but in this paper we focus on the dualistic structure $(g, \nabla, \nabla^*)$.

In Section \ref{sec:c.divergence} we review the framework of $c$-divergence, a divergence on the graph $G \subset M \times M'$ of optimal transport derived from the optimal transport map $f$ and the corresponding Kantorovich potentials. Introduced by Pal and the first author in \cite{PW18, W18}, this framework includes the Bregman divergence as well as the $L^{(\alpha)}$-divergence studied in \cite{W15, PW16, PW18, W18, W19}. The $L^{(\alpha)}$-divergence has many interesting properties; here we only note that its dualistic structure is dually projectively flat with constant sectional curvature $-\alpha$ and leads naturally to a generalized exponential families \cite{W18}. In fact, {\it  any} divergence in the sense of Definition \ref{def:divergence} can be regarded as a $c$-divergence by letting $c = {\bf D}$ (see Example \ref{eg:any.divergence}). Given the optimal transport map $f$, we regard the graph $G = \{(p, f(p)) : p \in M\}$ as an embedded submanifold of the product manifold $M \times M'$, and the $c$-divergence as a divergence on $G$. This gives a dualistric structure $(g, \nabla, \nabla^*)$ on $G$. 

In Section \ref{sec:geometry.connection} we study the relations between the two geometries $(M \times M', h)$ and $(G, g, \nabla, \nabla^*)$. Our main result, informally stated, reads as follows.

\begin{theorem} \label{thm:sec3}
	The information-geometric Riemannian metric $g$ is the restriction of the pseudo-Riemannian metric $h$ to $G$. Moreover, the Levi-Civita connection $\bar{\nabla}$ of $h$ induces $(\nabla, \nabla^*)$ via natural horizontal and vertical projection maps. Similar statements hold for the curvature.
\end{theorem}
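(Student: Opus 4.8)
The plan is to reduce everything to a local coordinate computation on the graph $G$, parametrized by $p \in M$, and to compare three objects: Eguchi's dualistic structure $(g, \nabla, \nabla^*)$ attached to the $c$-divergence $\mathbf{D}$ via \eqref{eqn:dualistic.structure.coefficients}, the restriction of $h = \frac{1}{2}\Hess\delta$ to $G$, and the Levi-Civita connection $\bar{\nabla}$ of $h$. The bridge is that, on the graph, $\mathbf{D}$ is built from the cross-difference \eqref{eqn:cross.difference}: writing $q_0' = f(p_0)$, $q' = f(p)$ and using a Kantorovich potential $\phi$, one has $\mathbf{D}[p : p_0] = c(p, f(p_0)) - c(p_0, f(p_0)) - (\phi(p) - \phi(p_0))$, so that its second- and third-order jets at $p = p_0$ are governed by derivatives of $c$. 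I would fix coordinates $x = (x^i)$ on $M$ and $y = (y^j)$ on $M'$, write $C_{ij} = c_{x^i y^j}$, and record the tangent frame $V_i = \partial_{x^i} + (Df)^l_i \partial_{y^l}$ of $G$.

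First, the metric. A direct computation from \eqref{eqn:cross.difference} shows that at the base point the pure blocks of $\Hess\delta$ vanish while the mixed block is $-C_{ij}$, so in the product coordinates $h$ has the off-diagonal form $h_{i\bar{j}} = -\frac{1}{2}C_{ij}$; in particular the horizontal distribution $H = TM \times 0$ and the vertical distribution $V = 0 \times TM'$ are null and $h$ restricts to a perfect pairing $H \times V \to \mathbb{R}$. Eguchi's formula gives $g_{ij} = -C_{il}(Df)^l_j$, while $h(V_i, V_j) = -\frac{1}{2}\big(C_{il}(Df)^l_j + C_{jl}(Df)^l_i\big)$. These agree once I differentiate the first-order optimality relation $\phi_{x^i}(p) = c_{x^i}(p, f(p))$, whose resulting Hessian identity forces $C_{il}(Df)^l_j$ to be symmetric in $i, j$. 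This proves $g = h|_G$.

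Next, the connections. The structural fact I would establish is that the Christoffel symbols of $h$ with mixed horizontal/vertical indices all vanish, so that $\bar{\nabla}_{\partial_x}\partial_y = 0$ and $\bar{\nabla}_{\partial_y}\partial_x = 0$; thus $H$ and $V$ are parallel null distributions, $\bar{\nabla}_{\partial_x}\partial_x$ is horizontal with $\bar{\Gamma}^k_{ij}$ of the schematic form $(C^{-1})^{kl}c_{x^i x^j y^l}$, and $\bar{\nabla}_{\partial_y}\partial_y$ is vertical with the symmetric analogue. Consequently $TG$ is transversal to both $H$ and $V$, and I would define $\nabla$ (resp. $\nabla^*$) as the projection of $\bar{\nabla}$ onto $TG$ along $V$ (resp. along $H$). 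Expanding $\bar{\nabla}_{V_i}V_j$ and using the vanishing mixed symbols, the horizontal part of $\bar{\nabla}_{V_i}V_j$ is exactly $\bar{\Gamma}^k_{ij}\partial_{x^k}$, so the $V$-projection has Christoffels $\bar{\Gamma}^k_{ij}$; lowering an index with $g$ reproduces Eguchi's $\Gamma_{ij,k} = -c_{x^i x^j y^l}(Df)^l_k$, and the symmetric computation for the $H$-projection reproduces $\Gamma^*_{ij,k}$. Since Eguchi's construction always yields torsion-free connections dual with respect to $g$ in the sense of \eqref{eqn:dual.connections}, I obtain the duality for free and need not re-derive it from $\bar{\nabla}$.

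Finally, curvature and the main obstacle. Because $\nabla$ and $\nabla^*$ are tangential projections of $\bar{\nabla}$ along the two null directions, the curvatures are linked by a Gauss--Codazzi equation: the ambient curvature $\bar{R}$ of $h$, evaluated on planes in $TG$, differs from the curvature of $\nabla$ (or $\nabla^*$) by second-fundamental-form terms valued in $V$ (or $H$). Specializing $\bar{R}$ to mixed horizontal--vertical planes recovers the MTW tensor, as in Kim--McCann and Example \ref{ex:Loeper}, which is how I would extract the information-geometric reading of the MTW tensor. The main obstacle is bookkeeping rather than conceptual: the dual connection $\nabla^*$ and the second fundamental forms bring in the second derivatives of $f$ (equivalently the term $\partial_{x_0}(Df)$ in Eguchi's third-order jet), and one must keep the two transversal projections, the factor $\frac{1}{2}$, and the signs consistent throughout. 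The computation is clean only under the twist condition $\det(C_{ij}) \neq 0$, which guarantees both the transversality of $TG$ to $H$ and $V$ and the nondegeneracy of $h$.
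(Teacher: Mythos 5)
Your proposal is correct, and for the metric and the connections it follows essentially the paper's own argument: the paper likewise compares Eguchi's coefficients \eqref{eqn:dualistic.structure.coefficients} (computed for the $c$-divergence in Lemma \ref{lem:info.geo.coefficients}) with the Kim--McCann Christoffel symbols of Lemma \ref{lem:Levi.Civita.connection}, and realizes $\nabla$, $\nabla^*$ as the connections induced by $\bar{\nabla}$ under the two projections $\pi_0,\pi_1$ of Definition \ref{def:projection.maps}, which coincide with your projections of $\bar{\nabla}$ onto $TG$ along $V$ and along $H$; your extra step of deriving the symmetry of $C_{il}(Df)^l_j$ from the envelope identity $\varphi_{x^i}(p)=c_{x^i}(p,f(p))$ is a detail the paper disposes of by noting that $g$ is symmetric by construction. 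Where you genuinely diverge is the curvature. The paper proceeds by direct computation: it records the nonzero components of $\bar{R}$ (Lemma \ref{lem:R.bar.coeff}), expresses $R$ and $R^*$ through them (Lemma \ref{lem:curvature.coefficients}), and obtains the precise identity $\overline{\sec}_u(X,Y)=\tfrac{1}{2}\left(\sec_u(X,Y)+\sec_u^*(X,Y)\right)$ for $X,Y$ tangent to $G$ (Theorem \ref{thm:average.sec}). Your Gauss--Codazzi route can reach the same identity, and more cleanly than you state, provided you exploit a fact you already have: since the mixed Christoffel symbols vanish, the null distributions $H$ and $V$ are $\bar{\nabla}$-parallel, so the shape operators vanish and there are in fact \emph{no} second-fundamental-form correction terms; the Gauss equation collapses to the exact relation $\bar{R}(X,Y)Z=\iota_0(R(X,Y)Z)+\iota_1(R^*(X,Y)Z)$ for $X,Y,Z$ tangent to $G$, the curvature analogue of \eqref{eqn:connection.identity.intrinsic}. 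Contracting with $h$ and using $h(\iota_0(W),Z)=h(\iota_1(W),Z)=\tfrac{1}{2}g(W,Z)$ for $W,Z\in TG$ then yields the averaging formula of Theorem \ref{thm:average.sec} with no coordinate computation of $\bar{R}$ at all, which is what this approach buys. As written, your statement that the curvatures ``differ by second-fundamental-form terms valued in $V$ (or $H$)'' is true but strictly weaker than the paper's identity, so you should sharpen the curvature paragraph along these lines to fully capture the clause ``similar statements hold for the curvature''.
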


This result embeds the dualistic geometry of information geometry, in complete generality, in the pseudo-Riemannian framework. Intuitively, this is a consequence of the fact that the cross-difference \eqref{eqn:cross.difference} is equal to the symmetrization of the $c$-divergence on the graph $G$ of the transport map (see Proposition \ref{prop:cross.as.symmetrization}). As a by-product, we obtain a new information-geometric interpretation of the MTW tensor on $G$ (Corollary \ref{cor:MTW.interpret}). Let us note that Theorem \ref{thm:sec3} is not the first result that considers the geometry of optimal transport in the context of information geometry. In \cite{KZ18} Khan and Zhang computed the MTW tensor for the logarithmic cost function corresponding to the $L^{(\alpha)}$-divergence, and found that it has a particularly simple form. This result motivated our work on this paper. In the follow up work \cite{KZ19}, they considered convex costs on $\mathbb{R}^n$ of the form $c(x, y) = \Psi(x - y)$, and showed that the MTW tensor is a multiple of the orthogonal holomorphic bisectional curvature of a K\"{a}hler manifold equipped with the Sasaki metric. On the other hand, in this paper we consider an arbitrary divergence and connect its dualistic structure with the original pseudo-Riemannian framework of Kim and McCann. Thus in this sense our results are more general. 

As in classical differential geometry, spaces of constant sectional curvatures are of special interest. In our context there are two kinds of sectional curvatures: the cross curvature of a cost function on $M \times M'$ (optimal transport) and the (primal and dual) sectional curvatures of the dualistic structure on $G$ (information geometry). Their relations are studied in Section 4. In particular, constant cross curvature implies constant information-geometric curvature. From this point of view, the dually flat geometry of Bregman divergence, which corresponds to the quadratic transport, follows immediately from the flatness of the pseudo-Euclidean space. The $L^{(\alpha)}$-divergence,  and its associated logarithmic cost function, lead to constant negative curvature.\footnote{Constant positive curvature corresponds to the $L^{(-\alpha)}$-divergence introduced in \cite{W18}. Since the proofs are similar, in this paper we focus on the $L^{(\alpha)}$-divergence.} In this setting we provide an intrinsic interpretation of the information geometric curvature in terms of the divergence between a pair of primal dual geodesics. Finally, in Section \ref{sec:conclusion} we conclude and discuss some avenues for future research.

We end the introduction with a brief discussion of the relevant literature. The connections between optimal transport and information geometry have been the inspiration of many recent papers. Apart from our line of works which centers on the $c$-divergence and $L^{(\alpha)}$-divergence corresponding to the Dirichlet transport, other perspectives have been considered in the literature and the following discussion is far from exhaustive. For example, \cite{AKO18, AKOC19} study divergences defined using the entropically relaxed transport problem (see Example \ref{eg:entropic}). The porous medium equation, which played an important role in the development of optimal transport, is studied in \cite{OW09} using tools of information geometry.  Relations between the Wasserstein metric and the Fisher-Rao metric are studied in \cite{M17, LM18, AKO18, CPSV18} among many others. Finite dimensional submanifolds of the Wasserstein space as well as their geometric and statistical properties are studied in a series of papers by Li and his collaborators; see \cite{li2018transport, CL18, LZ19} and the references therein. 

\section{$c$-divergence and the pseudo-Riemannian framework} \label{sec:prelim}
This section sets the stage of the paper. In Section \ref{sec:c.divergence} we review the $c$-divergence introduced by Pal and the first author in \cite{PW18, W18}. It is worth noting that {\it any} divergence in the sense of Definition \ref{def:divergence} can be regarded as a $c$-divergence via a suitable choice of the cost function (see Example \ref{eg:any.divergence}). Then, in Section \ref{sec:pseudo.Riemannian}, we introduce the pseudo-Riemannian framework of Kim and McCann \cite{KM10, M12}. For motivations and background in optimal transport we refer the reader to \cite{PW18, W18} and their references.

\subsection{$c$-divergence} \label{sec:c.divergence}
We first adapt the definitions in \cite{PW18, W18} to our differential-geometric setting. Let $M$ and $M'$ be $n$-dimensional smooth manifolds, and let $c: M \times M' \rightarrow \mathbb{R}$ be a (smooth) cost function. In many cases of interest we have $M = M' \subset \mathbb{R}^n$. We denote generic points in $M$ and $M'$ by $p$ and $q'$ respectively. 

Let $\varphi: M \rightarrow \mathbb{R}$ be smooth and $c$-concave, i.e., there exists $\psi : M' \rightarrow \mathbb{R} \cup \{-\infty\}$ such that
\[
\varphi(p) = \inf_{q' \in M'} (c(p, q') - \psi(q')), \quad p \in M.
\]
Suppose that the $c$-gradient of $\varphi$ exists, i.e., for each $p \in M$ there exists a unique $q' =: D^c \varphi (p) \in M'$ such that $\varphi(p) + \psi(q') = c(p, q')$. We assume that $f = D^c \varphi$ is a smooth diffeomorphism from $M$ onto its range $f(M) \subset M'$. Intuitively, $f$ represents an optimal transport map, with respect to the cost $c$, for a given pair of probability measures on $M$ and $M'$ respectively. Note that $\psi = \varphi^c$ is the $c$-transform of $\varphi$ and is given by
\begin{equation} \label{eqn:c.transform}
	\psi(q) = \inf_{p \in M} (c(p, q) - \varphi(p)) = c(f^{-1}(q), q) - \varphi(f^{-1}(q)), \quad q \in M'.
\end{equation}
For an exposition of these concepts see \cite[Section 1.6]{S15}.

Let $G$ be the graph of the optimal transport map, i.e.,
\begin{equation} \label{eqn:graph.transport.map}
	G = G_{f} := \{(p, f(p)) \in M \times M' : p \in M\}.
\end{equation}
Clearly $G$ is an $n$-dimensional embedded submanifold of the product manifold $M \times M'$. We regard the $c$-divergence, defined below, as a divergence on $G$. 

\begin{definition}[$c$-divergence] 
	For $x = (p, q), x' = (p', q') \in G$, we define
	\begin{equation} \label{eqn:c.divergence}
		{\bf D}[x : x'] = c(p, q') -  \varphi(p) - \psi(q'). 
	\end{equation}
	We call ${\bf D} : G \times G \rightarrow [0, \infty)$ the $c$-divergence by the transport on $G$.
\end{definition}

By the generalized Fenchel inequality ${\bf D}$ is non-negative. For it to be a divergence on $G$ in the sense of Definition \ref{def:divergence}, we require that the cost function is non-degenerate in the sense of \cite[Definition 2.2]{KM10}, which means that in local coordinates the 
matrix $\frac{\partial^2}{\partial p^i \partial q'^{j}} c(p, q')$ is invertible (see also the discussion in Section \ref{sec:pseudo.Riemannian}). This condition will be assumed throughout the paper. 

\begin{remark} \label{rem:graph.transport}
	Note that the $c$-divergence is defined on the graph $G$. We may identify $G$ with $M$ (or $f(M) \subset M'$) via the mapping $x = (p, f(p)) \mapsto p$ (or $x = (p, f(p)) \mapsto q = f(p)$). In particular, $M$ and $f(M)$ may also be identified via the transport map $p \mapsto f(p)$. The last identification is used in \cite[Definition 3.3]{PW18} and \cite[Definition 7]{W18}.
\end{remark}

\begin{remark}[Interpretation of the $c$-divergence] \label{rem:interpretation.c.div}
	Let $\mu$ and $\nu$ be probability measures satisfying $\nu = f_{\#}\mu$, so that $f = D^c \varphi$ is an optimal transport map for the pair $(\mu, \nu)$. By Kantorovich's duality, the value of the transport problem is given by
	\[
	\mathcal{T}_c(\mu, \nu) = \int_M \varphi(p) d\mu(p) + \int_{M'} \psi(q') d \nu(q').
	\]
	Now let $\gamma \in \Pi(\mu, \nu)$ be a coupling of $(\mu, \nu)$ which may be suboptimal. Given $(p, q')$ on the support of $\gamma$, let $x := (p, f(p)), x' := (f^{-1}(q'), q') \in G$; see the projection maps introduced in Definition \ref{def:projection.maps}. We have
	\begin{equation*}
		\begin{split}
			\int_{M \times M'} {\bf D}[x : x'] d\gamma &= \int c(p, q') - \varphi(p) - \psi(q') d\gamma \\
			&= \int c d \gamma - \mathcal{T}_c(\mu, \nu) \geq 0.
		\end{split}
	\end{equation*}
	Thus the expected $c$-divergence is the excess transport cost compared to the optimal coupling. Intuitively, the $c$-divergence ${\bf D}[x : x']$ measures the ``distance'' between $(p, q')$ and the graph $G$ of optimal transport (see Figure \ref{fig:projections}).
\end{remark}

Before giving specific examples, let us make the important observation that {\it any divergence can be regarded as a $c$-divergence}.

\begin{example}[Any divergence is a $c$-divergence] \label{eg:any.divergence}
Suppose $M = M'$ and ${\bf D}: M \times M \rightarrow [0, \infty)$ is an arbitrary divergence as in Definition \ref{def:divergence}. Consider the cost function $c \equiv {\bf D}$ given by the divergence. Then the identity transport $f(p) \equiv p$ has a graph -- the diagonal of $M \times M$ -- which is $c$-cyclically monotone. This transport map is induced by the constant $c$-concave function $\varphi(x) \equiv 0$ whose $c$-transform $\psi$ is also zero. Since $\varphi$ and $\psi$ both vanish, the $c$-divergence \eqref{eqn:c.divergence} is exactly the given divergence ${\bf D}$.
\end{example}

While any divergence can be regarded as a $c$-divergence, the more interesting case for the purposes of this paper is where a given cost induces many $c$-cyclically monotone graphs \eqref{eqn:graph.transport.map} by varying the transport map $f$ (which solves the optimal transport problem for different pairs of $(\mu, \nu)$). Clearly the existence of such graphs is closely related to the existence and regularity of optimal transport map. Instead of giving precise sufficient conditions (which can be found in \cite[Chapter 12]{V08}), to focus on the geometric ideas we simply assume that $f$ and the Kantorovich potentials $(\varphi, \psi)$ are given.

\begin{example} [Bregman divergence] \label{eg:Bregman.divergence}
Let $M = M' = \mathbb{R}^n$ and let $c(p, q') = \frac{1}{2}|p - q'|^2$ be the quadratic cost (where $p$, $q'$ are expressed in Euclidean coordinates). Note that for this cost the effective term for the transport is $-p \cdot q'$ since the other two terms of the expansion depend only on one of the variables. This important example has been treated in \cite[Section 2]{W18} and to illustrate the ideas we recall the argument. By Brenier's theorem \cite{B91}, the optimal transport map has the form $q = f(p) = D \phi(p)$, where $\phi$ is a convex function and $D\phi$ is the gradient. Here we assume $\phi$ is smooth and $\phi'' > 0$. Then $\varphi(p) = \frac{1}{2}|p|^2 - \phi(p)$ is $c$-concave, $D^c \varphi = D \phi$, and $\psi(q') = \varphi^c(q') = \frac{1}{2}|q'|^2 - \varphi^*(q')$, where $\varphi^*$ is the convex conjugate of $\varphi$. Since $\varphi(p) + \varphi^*(q') \equiv p \cdot q'$ by convex duality, substituting into \eqref{eqn:c.divergence} and simplifying, we see that the $c$-divergence corresponding to the triple $(f, \varphi, \psi)$ is the classic Bregman divergence:
\begin{equation} \label{eqn:c.as.Bregman1}
{\bf D}[x : x'] = (\phi(p) - \phi(p')) - D \phi(p') \cdot ( p - p'),
\end{equation}
where $p' = f^{-1}(q') = D \phi^*(q')$. Note that in \eqref{eqn:c.as.Bregman1} we represented the divergence in terms of the primal coordinate system $p$. When expressed in the dual coordinate system $q$, we have
\begin{equation*} 
{\bf D}[x : x'] = (\psi(q') - \psi(q)) - D \psi(q) \cdot ( q' - q).
\end{equation*}
Thus we recover the self-dual representation of the Bregman divergence \cite[(1.68)]{A16}. Its information geometry, which is dually flat \cite[Chapter 1]{A16}, is revisited in Example \ref{eg:Bregman.geometry}. 
\end{example}

\begin{example} [$L^{(\alpha)}$-divergence] \label{ex:L.alpha}
This may be regarded as a ``nonlinear deformation'' of Example \ref{eg:Bregman.divergence}. Let $M = M' = (0, \infty)^n$, and let $\alpha > 0$ be a fixed parameter. Consider the logarithmic cost function
\begin{equation} \label{eqn:alpha.cost}
c(p, q') = \frac{1}{\alpha} \log (1 + \alpha p \cdot q'),
\end{equation}
where $a \cdot b$ is the Euclidean dot product. This reduces, after a suitable reparameterization, to the cost function of the Dirichlet transport studied in \cite{PW18b}. Remarkably, as shown in \cite{PW18b} and \cite[Theorem 6]{W18}, the optimal transport map is still explicit and is given by
\begin{equation} \label{eqn:transport.map.L.alpha}
f(p) = \frac{D\varphi(p)}{1 - \alpha D\varphi(p) \cdot p},
\end{equation}
where $\varphi$ is an $\alpha$-exponentially concave function on $M$ (i.e., $e^{\alpha \varphi}$ is concave) satisfying suitable regularity conditions (see \cite[Condition 7]{W18}) that will be assumed implicitly.  The representation \eqref{eqn:transport.map.L.alpha} of the optimal transport map may be regarded as an analogue of Brenier's theorem. By an argument similar to the one presented in Example \ref{eg:Bregman.divergence}, it can be shown that the $c$-divergence is the $L^{(\alpha)}$-divergence given by
\begin{equation} \label{eqn:L.alpha.divergence}
{\bf D}[x : x'] = \frac{1}{\alpha} \log ( 1 + \alpha D \varphi(p') \cdot (p - p')) - (\varphi(p) - \varphi(p')).
\end{equation}
For detailed studies of this divergence see \cite{PW18, W15, W18, PW18b, W19}. Note that as $\alpha \rightarrow 0$ the $L^{(\alpha)}$-divergence converges to the Bregman divergence of the convex function $-\varphi$. In \cite{PW18, W18} it was shown that the induced dualistic structure is dually projectively flat with constant sectional curvature $-\alpha$ (the converse is also true; for the precise statement see \cite[Theorem 19]{W18}). Also see \cite{P17} which interprets the quadratic cost and \eqref{eqn:alpha.cost} in terms of convex costs (Example \ref{eg:convex.cost}) defined by exponential families. The regularity theory of this transport problem is recently addressed in \cite{KZ18} which motivated our study. 
\end{example}

\begin{example} [Entropic regularization] \label{eg:entropic}
	From \eqref{eqn:c.divergence}, a general $c$-divergence is, apart from a change of coordinates (via $q = f(p)$), the same as the original cost function up to some linear terms that involve only $p$ or $q'$. We show how this idea can be used to interpret the $D_{\lambda}$-divergence in the recent paper \cite{AKOC19} which defines a ``modified Sinkhorn divergence'' for the entropically regularized transport problem. 
	
	To be consistent with this paper we modify slightly the notations of \cite{AKOC19}. Let $\mathcal{X} = \{0, 1, \ldots, n\}$ be a finite set, and let $C = (C_{ij})$ be a non-negative cost on $\mathcal{X} \times \mathcal{X}$ which vanishes on the diagonal. Given $p, q' \in \mathcal{P}(\mathcal{X})$ (probability distributions on $\mathcal{X}$) and a coupling $\pi \in \Pi(p, q')$, consider the entropically regularized cost given by 
	\[
	\mathcal{L}(\pi) = C_{i, j} p^i q'^j - \lambda H(\pi),
	\]
	where $\lambda > 0$ is a regularization parameter and $H(\pi)$ is the Shannon entropy of the coupling $\pi$. We define the so-called $C$-function by
	\begin{equation} \label{eqn:C.function}
		C_{\lambda} (p, q') := \min_{\pi \in \Pi(p, q')} \mathcal{L}(\pi).
	\end{equation}
	Let $M = M' = \mathcal{P}_+ (\mathcal{X}) \equiv \{p \in (0, 1)^{1 + n} : \sum_i p^i = 1\}$ and consider the cost function $c = C_{\lambda}$. In \cite[Theorem 2]{AKOC19} it is shown that
	\begin{equation} \label{eqn:shrinkage.operator}
		q'^* := \argmin_{q' \in M'} C_{\lambda}(p, q') = \tilde{K}_{\lambda} p,
	\end{equation}
	where $\tilde{K}_{\lambda}$ is an injective shrinkage operator. 
	
	Consider the function $\psi(q') \equiv 0$ on $M'$. Then
	\[
	\varphi(p) = \psi^c(p) = \inf_{q' \in M'} C_{\lambda}(p, q') = C_{\lambda}(p, \tilde{K}_{\lambda}p)
	\]
	and it is easy to see that $\varphi^c = \psi^{cc} = 0 = \psi$ on the range of $\tilde{K}_{\lambda}$. Thus, restricting $c$ to $M \times \tilde{K}_{\lambda}(M)$, $\varphi$ is $c$-concave and the corresponding optimal transport map is given by $q = f(p) = \tilde{K}_{\lambda} p$. The $c$-divergence is given by
	\[
	{\bf D}[x : x'] = C_{\lambda}(p, q') - C_{\lambda}(p, \tilde{K}_{\lambda}p) = C_{\lambda}(p, \tilde{K}_{\lambda}p') - C_{\lambda}(p, \tilde{K}_{\lambda}p),
	\]
	which is nothing but the $D_{\lambda}$-divergence in \cite[Definition 1]{AKOC19} apart from a multiplicative constant. Clearly the same approach extends to other regularizations as long as the analogue of \eqref{eqn:shrinkage.operator} is well-defined. 
\end{example}

\subsection{Pseudo-Riemannian framework} \label{sec:pseudo.Riemannian}
Next we describe the pseudo-Riemannian framework of Kim and McCann \cite{KM10} which gives a geometric interpretation of the regularity theory of optimal transport studied by Ma, Trudinger \& Wang \cite{MTW05}, Loeper \cite{L09} and many others. A general reference of pseudo-Riemannian geometry is \cite{O83}.

Consider manifolds $M, M'$ and the cost function $c$ as above. Recall the cross-difference $\delta = \delta(p, q', p_0, q_0') : (M \times M')^2 \rightarrow \mathbb{R}$ defined in \eqref{eqn:cross.difference}. 
The pseudo-Riemannian metric $h$ is given in terms of the Hessian of $\delta$ in the $2n$-dimensional variables $(p, q')$. More precisely, let $\xi = (\xi^1, \ldots, \xi^n)$ and $\eta' = (\eta'^{\bar{1}}, \ldots, \eta'^{\bar{n}})$ be local coordinates on $M$ and $M'$ respectively, and express the cost function in the form $c = c(\xi, \eta')$. Note that we use $i$ to denote indices for $M$ and $\bar{i}$ for $M'$ to be consistent with the index notations in \cite{KM10}. We denote
\begin{equation} \label{eqn:c.derivatives}
	\begin{split}
		c_{i:}(\xi, \eta') = \frac{\partial}{\partial \xi^i} c(\xi, \eta'), &\quad c_{:\bar{j}}(\xi, \eta') = \frac{\partial}{\partial \eta'^{\bar{j}}} c(\xi, \eta'),\\
		c_{i:\bar{j}}(\xi, \eta') = \frac{\partial^2 }{\partial \xi^i\partial \eta'^{\bar{j}}} c(\xi, \eta'), &\quad 
		c_{ij:\bar{k}}(\xi, \eta') = \frac{\partial^2 }{\partial \xi^i \partial \xi^j} \frac{\partial}{\partial \eta'^{\bar{k}}} c(\xi, \eta'),
	\end{split}
\end{equation}
and so on. By the product structure we have the canonical decomposition
\[
T_{(p, q')} (M \times M') = T_{p} M \oplus T_{q'} M'.
\]
A generic tangent vector $v$ at $(\xi, \eta') \in M \times M'$ (with an abuse of notation) can be written in the form
\begin{equation} \label{eqn:tangent.vector.product}
	v = a^i \frac{\partial}{\partial \xi^i} + b^{\bar{i}} \frac{\partial}{\partial \eta'^{\bar{i}}}.
\end{equation}
Define the $2n \times 2n$ matrix
\begin{equation} \label{eqn:metric.h}
	h = h(\xi, \eta') = 
	\frac{1}{2} 
	\begin{bmatrix}
		0       & -\bar{D}D c \\
		-D\bar{D} c       & 0 
	\end{bmatrix},
\end{equation}
where the matrices $\bar{D}D c := (c_{i:\bar{j}})_{i,\bar{j}}$ and $\bar{D}Dc := (c_{j:\bar{i}})_{\bar{i},j}$ are evaluated at $(\xi, \eta')$. We assume throughout that $c$ is non-degenerate, i.e., $\overline{D}Dc$ and $D \overline{D}c$ are invertible. Using \eqref{eqn:tangent.vector.product} and \eqref{eqn:metric.h}, we define a pseudo-Riemannian metric $h$ by
\begin{equation} \label{eqn:pseudo.Riemannian.metric}
	h(\xi, \eta')(v, v) = v^{\top} h v = \frac{1}{2}
	\begin{bmatrix}
		a^{\top}       & b^{\top}  
	\end{bmatrix}
	\begin{bmatrix}
		0       & -\bar{D}D c \\
		-D\bar{D} c       & 0 
	\end{bmatrix}
	\begin{bmatrix}
		a \\
		b  
	\end{bmatrix} = -c_{i:\bar{j}} a^i b^{\bar{j}}.
\end{equation}
It is easy to see that \eqref{eqn:pseudo.Riemannian.metric} is equivalent to the following intrinsic definition.

\begin{lemma} \label{lem:metric.intrinsic}
	Write the cross-difference in the form $\delta(p, q', p_0, q_0') = \delta(x, y)$, where $x = (p, q'), y = (p_0, q_0') \in M \times M'$. Let $X$ and $Y$ be vector fields on $M \times M'$. Then
	\[
	h(X, Y) = -\left.X_{(x)} Y_{(y)} \delta(x, y)\right|_{x = y},
	\]
	where $X_{(x)}$ is the derivation applied to the function when $x$ varies and $y$ is kept fixed (similar for $Y_{(y)}$).
\end{lemma}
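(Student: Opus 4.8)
The plan is to verify the identity directly in the same local coordinates $(\xi,\eta')$, $(\xi_0,\eta_0')$ used to define the matrix \eqref{eqn:metric.h}, so that everything reduces to a short second-derivative computation on the four-term expression \eqref{eqn:cross.difference} for $\delta$. Writing $x=(p,q')\leftrightarrow(\xi,\eta')$ and $y=(p_0,q_0')\leftrightarrow(\xi_0,\eta_0')$, I would first record which coordinates are ``$x$-variables'' ($\xi,\eta'$) and which are ``$y$-variables'' ($\xi_0,\eta_0'$), and then expand the two derivations as $X_{(x)}=a^i\partial_{\xi^i}+b^{\bar i}\partial_{\eta'^{\bar i}}$ and $Y_{(y)}=\tilde a^j\partial_{\xi_0^j}+\tilde b^{\bar j}\partial_{\eta_0'^{\bar j}}$ for generic vector fields $X,Y$ with components $(a,b)$ and $(\tilde a,\tilde b)$.

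The key structural observation is that of the four terms in \eqref{eqn:cross.difference}, only the two ``mixed'' terms $c(p,q_0')$ and $c(p_0,q')$ couple an $x$-variable to a $y$-variable; the ``pure'' terms $-c(p,q')$ and $-c(p_0,q_0')$ depend on $x$-variables only or on $y$-variables only, and are therefore annihilated by the mixed derivation $X_{(x)}Y_{(y)}$. Applying $Y_{(y)}$ first and then $X_{(x)}$ to the two surviving terms produces exactly the second mixed cost-derivatives, giving $X_{(x)}Y_{(y)}\delta = a^i\tilde b^{\bar j}c_{i:\bar j}(\xi,\eta_0') + \tilde a^j b^{\bar i}c_{j:\bar i}(\xi_0,\eta')$. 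Setting $x=y$ collapses both evaluation points to the common base point $(\xi,\eta')$, and comparing with \eqref{eqn:metric.h}--\eqref{eqn:pseudo.Riemannian.metric}, whose only nonzero entries are the blocks $\bar{D}Dc=(c_{i:\bar j})$ and $D\bar{D}c=(c_{j:\bar i})$, identifies the result with $h(X,Y)$ up to sign.

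The sign is most cleanly explained structurally rather than by index-chasing. I would record two elementary facts about the cross-difference: it is symmetric, $\delta(x,y)=\delta(y,x)$, and it vanishes on the diagonal, $\delta(x,x)=0$. Differentiating the identity $\delta(x,x)\equiv 0$ twice in $x$ and using the symmetry (which forces $\Hess_x\delta=\Hess_y\delta$ and symmetrizes the mixed term on the diagonal) shows that on the diagonal the pure $x$-Hessian and the mixed $x$--$y$ derivative of $\delta$ are negatives of one another, $\Hess_x\delta|_{x=y}=-\,[X_{(x)}Y_{(y)}\delta]|_{x=y}$ as quadratic forms. Since \eqref{eqn:metric.h} is (a multiple of) this $x$-Hessian, this simultaneously pins down the minus sign in the statement and confirms compatibility with the definition $h=\tfrac12\Hess\delta$ used in the introduction; one should keep track of the normalizing factor carefully at this last step to match the intended convention.

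I would expect the only real obstacle to be bookkeeping: keeping straight which of $\xi,\eta',\xi_0,\eta_0'$ is held fixed under each derivation, matching the barred/unbarred index placement to the correct off-diagonal block of \eqref{eqn:metric.h}, and performing the evaluation $x=y$ so that $c_{i:\bar j}(\xi,\eta_0')$ and $c_{j:\bar i}(\xi_0,\eta')$ are both read off at the single point $(\xi,\eta')$. None of this is deep, so the lemma is essentially a consistency check that the intrinsic (derivation) description of $h$ agrees with its coordinate matrix; the structural identity of the previous paragraph is what makes this check transparent and manifestly coordinate-free.
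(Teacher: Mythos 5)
Your computation is exactly the verification the paper has in mind: the paper offers no proof of this lemma beyond asserting that \eqref{eqn:pseudo.Riemannian.metric} is ``easy to see'' to be equivalent to the intrinsic formula, and the intended argument is precisely your observation that the two pure terms of \eqref{eqn:cross.difference} are annihilated by the mixed derivation $X_{(x)}Y_{(y)}$, leaving the $c_{i:\bar j}$-contractions. Your structural sign argument (symmetry $\delta(x,y)=\delta(y,x)$ plus $\delta(x,x)\equiv 0$ forcing $\Hess_x\delta = -D_xD_y\delta$ on the diagonal) is also correct and is a clean, coordinate-free way to organize the signs.

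The gap is that the ``normalizing factor'' you defer at the end is not mere bookkeeping: it changes the statement, and your write-up never closes it. Carried to completion, your own identities give, on the diagonal,
\[
-\left. X_{(x)} Y_{(y)} \delta \right|_{x=y}
= -c_{i:\bar j}\left(a^i \tilde b^{\bar j} + \tilde a^i b^{\bar j}\right)
= \left(\Hess_x \delta\right)\!(X,Y)\Big|_{x=y},
\]
where $X$ has components $(a,b)$ and $Y$ has components $(\tilde a,\tilde b)$, whereas polarizing \eqref{eqn:pseudo.Riemannian.metric} --- whose matrix \eqref{eqn:metric.h} carries the factor $\frac{1}{2}$ --- gives $h(X,Y) = -\frac{1}{2} c_{i:\bar j}\left(a^i \tilde b^{\bar j} + \tilde a^i b^{\bar j}\right)$. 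So under the paper's own conventions the right-hand side of the lemma equals $2h(X,Y)$, and the honest conclusion of your argument is $h(X,Y) = -\frac{1}{2}\left. X_{(x)}Y_{(y)}\delta\right|_{x=y}$; your intermediate claim that the computation ``identifies the result with $h(X,Y)$ up to sign'' is off by exactly this factor. The quadratic cost makes the discrepancy unambiguous: there $\delta(x,y) = (p-p_0)\cdot(q'-q_0')$, so $-\left. X_{(x)}Y_{(y)}\delta\right|_{x=y} = a\cdot\tilde b + \tilde a\cdot b$, while \eqref{eqn:metric.h.quadratic.case} gives $h(X,Y) = \frac{1}{2}\left(a\cdot\tilde b + \tilde a\cdot b\right)$. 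A complete proof must therefore either state the corrected identity (with the $\frac{1}{2}$) or explicitly record that the lemma as printed is consistent only with the unhalved Hessian $\Hess_x\delta$ rather than with $h = \frac{1}{2}\Hess\delta$; leaving this as a caveat about ``the intended convention'' leaves the statement unproved as written.
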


In \cite{KM10} it is shown that $h$ has signature $(n, n)$, i.e., the matrix  \eqref{eqn:metric.h} (denoted also by $h$) has $n$ positive eigenvalues and $n$ negative eigenvalues. Given the pseudo-Riemannian metric $h$, one can consider geodesics with respect to the Levi-Civita connection $\bar{\nabla}$ as well as the Riemann curvature tensor $\bar{R}$; these objects will be studied in the next section. In particular, the (unnormalized) sectional curvature gives a geometric interpretation of the Ma-Trudinger-Wang (MTW) tensor, introduced in \cite{MTW05}, which plays a crucial role in the regularity theory of optimal transport maps. We will recall the definition of the MTW tensor in Remark \ref{rmk:MTW}.

The following result explains intuitively why the dualistic structure and the pseudo-Riemannian framework are related. The details of this relation, which amounts to desymmetrizing \eqref{eqn:c.divergence.and.cross.diff}, are worked out in Section \ref{sec:geometry.connection}.

\begin{proposition}[Cross difference is symmetrization of $c$-divergence] \label{prop:cross.as.symmetrization}
Consider a $c$-divergence ${\bf D}$ associated to $(f, \varphi, \psi)$ and the graph $G$. Then for $x = (p, q)$ and $x' = (p', q')$ in $G$, we have
\begin{equation} \label{eqn:c.divergence.and.cross.diff}
\delta(p, q, p', q') = {\bf D}[x : x'] + {\bf D}[x' : x].
\end{equation}
Thus on $G \times G$ the cross difference is equal to the symmetrization of the $c$-divergence.
\end{proposition}
\begin{proof}
Let $x = (p, q), x' = (p', q') \in G$. Using the definition of $c$-divergence, we have
\begin{equation*} 
\begin{split}
{\bf D}[x : x'] + {\bf D}[x' : x] &= (c(p, q') - \varphi(p) - \psi(q')) + (c(p', q) - \varphi(p') - \psi(q)) \\
	&= c(p, q') + c(p', q) - c(p, q) - c(p', q') = \delta(p, q, p', q').
\end{split}
\end{equation*}
\end{proof}

\begin{remark}
As an extension of \eqref{eqn:c.divergence.and.cross.diff}, we may consider three pairs of points instead of two. Given $x_i = (p_i, q_i)$, $i = 1, 2, 3$, on $G$, we have
\begin{equation} \label{eqn:3.points}
{\bf D}[x_2 : x_1] + {\bf D}[x_3 : x_2] - {\bf D}[x_3 : x_1] = c(p_2, q_1) + c(p_3, q_2) - c(p_3, q_1) - c(p_2, q_2).
\end{equation}
This identity was first observed in \cite[Section 3.3]{PW18}. In terms of optimal transport, this equals the excess transport cost (which can be positive or negative) of the coupling $(p_1 \rightarrow q_3, p_2 \rightarrow q_1, p_3 \rightarrow q_2)$ over $(p_1 \rightarrow q_3, p_2 \rightarrow q_2, p_3 \rightarrow q_1)$, and reduces to the cross-difference when $x_1 = x_3$. Since a divergence is locally quadratic (see \eqref{eqn:divergence.local.quadratic}), the left hand side of \eqref{eqn:3.points} may be called a ``Pythagorean expression". Such expressions play an important role in information geometry. Specifically, both the Bregman and $L^{(\alpha)}$-divergence satisfy a {\it generalized Pythagorean theorem} \cite[Theorem 16]{W18} which characterizes the sign of \eqref{eqn:3.points} in terms of the Riemannian angle of a primal-dual geodesic triangle.
\end{remark}


\begin{example} [Quadratic cost] \label{eg:quadratic.cost}
	Suppose $M = M' = \mathbb{R}^n$ and $c(p, q') = \frac{1}{2}|p - q'|^2$ as in Example \ref{eg:Bregman.divergence}. It is easy to verify that the matrix of the pseudo-Riemannian metric is given by
	\begin{equation} \label{eqn:metric.h.quadratic.case}
		h = h(p, q') = 
		\frac{1}{2} 
		\begin{bmatrix}
			0       & I \\
			I       & 0 
		\end{bmatrix},
	\end{equation}
	where $I$ is the $n \times n$ identity matrix. If $v = a^i \frac{\partial}{\partial p^i} + b^{\bar{i}} \frac{\partial}{\partial q'^{\bar{i}}}$ is a tangent vector, then \eqref{eqn:metric.h.quadratic.case} gives $h(p, q')(v, v) = a^1 b^{\bar{1}} + \cdots + a^n b^{\bar{n}}$.
	
	Consider the pseudo-Euclidean space $\mathbb{R}_n^{2n} := \{(x, y): x, y \in \mathbb{R}^n\}$ with the metric
	\begin{equation} \label{eqn:pseudo.Euclidean.space.metric}
		ds^2 = (x^1)^2 + \cdots + (x^n)^2 - (y^{1})^2 - \cdots - (y^{n})^2.
	\end{equation}
	It is easy to verify that the mapping
	\[
	(p, q') \mapsto \left(\frac{p^1 + q'^1}{2}, \ldots, \frac{p^n + q'^n}{2}, \frac{p^1 - q'^1}{2}, \ldots, \frac{p^n - q'^n}{2}\right)
	\]
	is an isometry from $(M \times M', h)$ to $\mathbb{R}_n^{2n}$. Note that $\mathbb{R}_n^{2n}$ is, up to isometries, the unique space form (complete connected pseudo-Riemannian manifold with constant curvature) with signature $(n, n)$ and zero curvature; see \cite[Corollary 8.24]{O83}. As we shall see in Example \ref{eg:Bregman.geometry}, the dually flat geometry of Bregman divergence follows directly from our framework and the flatness of the pseudo-Euclidean space.  
\end{example}

\begin{example} [General convex cost] \label{eg:convex.cost}
	Suppose again $M = M' = \mathbb{R}^n$. Now let $c(p, q') = \Psi(p - q')$ where $\Psi$ is strictly convex. The solution to this transport problem is given by Gangbo and McCann \cite{GM96}. The pseudo-Riemannian metric is given in Euclidean coordinates by
	\[
	h = h(p, q') = 
	\frac{1}{2} 
	\begin{bmatrix}
		0       & D^2 \Psi (p - q') \\
		D^2 \Psi  (p - q')     & 0 
	\end{bmatrix},
	\]
	where $D^2 \Psi$ is the Hessian matrix of $\Psi$. Khan and Zhang \cite{KZ19} expressed the MTW tensor for this cost in terms of the bisectional curvature of a certain K\"{a}lher manifold. 
\end{example}

\section{Connecting the two geometries} \label{sec:geometry.connection}
We show that the pseudo-Riemannian framework encodes the dualistic structure in information geometry. In essence, the pseudo-Riemannian metric $h$ on the product manifold $M \times M'$ induces, in a sense to be made precise, the dualistic structure $(g, \nabla, \nabla^*)$ of the $c$-divergence on the graph $G$ regarded as a submanifold of $M \times M'$.

\subsection{Preliminaries} \label{sec:coeff.prelim}
We begin with some notations and preliminary results. Consider the graph
\[
G = \{(p, f(p)) : p \in M\} \subset M \times M'
\]
equipped with the $c$-divergence ${\bf D}$ given by \eqref{eqn:c.divergence}. Fix local coordinate systems $\xi = (\xi^1, \ldots, \xi^n)$ on $M$ and $\eta' = (\eta'^{\bar{1}}, \ldots, \eta'^{\bar{n}})$ on $M'$. Clearly $(\xi, \eta')$ is a coordinate system on $M \times M'$. By an abuse of notations, the coordinates are related on $G$ by $\eta = f(\xi)$. Projecting $G$ to $M$ (using $(p, f(p)) \mapsto p$) and $M'$ (using $(p, f(p)) \mapsto q = f(p)$) respectively, we may regard $\xi$ and $\eta$ as local coordinate systems of $G$. We call $\xi$ the {\it primal} coordinates and $\eta$ the {\it dual} coordinates on $G$. See Figure \ref{fig:primal.dual.coordinates} for an illustration (also see Remark \ref{rem:graph.transport} and compare with \cite[Figure 2]{PW18}).

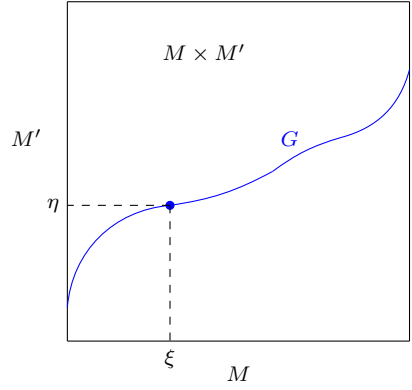
\begin{figure}[t!]
	\centering
	\begin{tikzpicture}[scale = 0.45]
		\draw (0, 0) to (10, 0);
		\draw (10, 0) to (10, 10);
		\draw (10, 10) to (0, 10);
		\draw (0, 10) to (0, 0);
		
		
		\draw[blue] (0, 1) to [bend left=40] (3, 4);
		\draw[blue] (3, 4) to [bend left=-10] (6, 5);
		\draw[blue] (6, 5) to [bend left=10] (8, 6);
		\draw[blue] (8, 6) to [bend left = -30] (10, 8);
		
		\node [blue, above] at (6.5, 5.5) {$G$};
		\node[circle, draw = blue, fill = blue,  inner sep=0pt, minimum size=3pt] at (3, 4) {};
		\draw[black, dashed] (3, 4) to (3, 0);
		\node [black, left] at (0, 4) {$\eta$};
		\draw[black, dashed] (3, 4) to (0, 4);
		\node [black, below] at (3, 0) {$\xi$};	
		
		\node [black, below] at (4, 9) {$M \times M'$};	
		\node [black, below] at (5, -0.5) {$M$};	
		\node [black, left] at (-0.5, 6) {$M'$};	
		
	\end{tikzpicture}
	\caption{The graph $G$ of optimal transport as an $n$-dimensional submanifold of $M \times M'$. The primal and dual coordinates correspond to projections onto $M$ and $f(M) \subset M'$ respectively.} \label{fig:primal.dual.coordinates}
\end{figure}

As mentioned in Section \ref{sec:summary}, the $c$-divergence ${\bf D}$, given by \eqref{eqn:c.divergence}, induces a dualistic structure $(g, \nabla, \nabla^*)$ on $G$, where $g$ is a Riemannian metric and $\nabla$ and $\nabla^*$ are torsion-free affine connections. Let us give the coordinate representation of these objects (see \cite[Chapter 6]{A16} and \cite[Chapter 11]{CU14} for more details). Suppose we use the primal coordinate system $\xi$ on $G$. Writing ${\bf D} = {\bf D}[\xi : \xi']$ as a function of $(\xi, \xi')$, we have
\begin{equation} \label{eqn:dualistic.structure.coefficients}
	\begin{split}
		g_{ij}(\xi) &= - \left. \frac{\partial}{\partial \xi^i} \frac{\partial}{\partial \xi'^j} \mathbf{D} [\xi : \xi'] \right|_{\xi = \xi'},\\
		\Gamma_{ijk}(\xi) &= - \left. \frac{\partial^2}{\partial \xi^i \partial \xi^j} \frac{\partial}{\partial \xi'^k} \mathbf{D} [\xi : \xi'] \right|_{\xi = \xi'},\\
		\Gamma_{ijk}^*(\xi) &= - \left. \frac{\partial^2}{\partial \xi'^i \partial \xi'^j} \frac{\partial}{\partial \xi^k} \mathbf{D} [\xi : \xi'] \right|_{\xi = \xi'}.\\
	\end{split}
\end{equation}
Here $\Gamma_{ijk}$ and $\Gamma_{ijk}^*$ are the Christoffel symbols of $\nabla$ and $\nabla^*$ respectively. Also we define $\Gamma_{ij}\mathstrut^{k} (\xi) = \Gamma_{ijm}(\xi) g^{mk}(\xi)$ and $\Gamma^*_{jk}\mathstrut^{k} (\xi) = \Gamma^*_{ijm}(\xi) g^{mk}(\xi)$, where $(g^{ij})$ is the inverse matrix of $(g_{ij})$. For instance, if $\partial_i = \partial/\partial \xi^i$ we have $\nabla_{\partial_i} \partial_j = {\Gamma_{ij}}^k \partial_k$. Similarly, we can write down the coefficients in terms of the dual coordinates $\eta = f(\xi)$. We denote by $\frac{\partial \eta}{\partial \xi} = \left( \frac{\partial \eta^{\bar{i}}}{\partial \xi^j} \right)$ the Jacobian matrix of the transition map $\xi \mapsto \eta$. Its inverse is given by $\frac{\partial \xi}{\partial \eta} = \left( \frac{\partial \xi^i}{\partial \eta^{\bar{j}}} \right)$. 

Write $c = c(\xi, \eta')$ as a function of $(\xi, \eta')$ (locally in $M \times M'$) and consider the matrix $(c_{i:\bar{j}})$ of cross derivatives given by \eqref{eqn:c.derivatives}. We denote its inverse (which exists since $c$ is non-degenerate) by $(c^{\bar{i} : j})$.  This means that
\begin{equation} \label{eqn:c.matrix.inverse}
	c_{i : \bar{m}}(\xi, \eta') c^{\bar{m} : j}(\xi, \eta') = \delta_i^j, \quad c^{\bar{i}:m}(\xi, \eta') c_{m:\bar{j}}(\xi, \eta') = \delta_{\bar{j}}^{\bar{i}},
\end{equation}
where $\delta_i^j$ and its analogues are Kronecker deltas. Differentiating \eqref{eqn:c.matrix.inverse}, we obtain the following useful identities that are also used in \cite{KM10}.

\begin{lemma} \label{lem:c.inverse.derivative}
	Under the local coordinate system $(\xi, \eta')$ in $M \times M'$, We have
	\begin{equation} \label{eqn:c.inverse.derivatives}
		\frac{\partial}{\partial \xi^k} c^{\bar{\ell}:j}(\xi, \eta') = -c^{\bar{\ell}:i} c_{ik:\bar{m}} c^{\bar{m}:j}, \quad \frac{\partial}{\partial \eta'^{\bar{k}}} c^{\bar{\ell}:j}(\xi, \eta') = -c^{\bar{\ell}:i} c_{i:\bar{k}\bar{m}} c^{\bar{m}:j}.
	\end{equation}
\end{lemma}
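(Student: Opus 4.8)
The plan is to read the two claimed formulas as the index-notation form of the standard identity for the derivative of a matrix inverse, namely $\partial(A^{-1}) = -A^{-1}(\partial A)A^{-1}$, applied to the matrix $A = (c_{i:\bar{j}})$ with inverse $A^{-1} = (c^{\bar{i}:j})$. Concretely, I would obtain both identities by differentiating the defining relation \eqref{eqn:c.matrix.inverse} of the inverse matrix and then contracting to isolate the unknown derivative. No genuinely hard step is involved; the only thing requiring care is the index bookkeeping and the placement of indices dictated by the notation in \eqref{eqn:c.derivatives}.

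For the first formula I would start from the identity $c_{i:\bar{m}} c^{\bar{m}:j} = \delta_i^j$ in \eqref{eqn:c.matrix.inverse} and apply $\partial/\partial\xi^k$. Since the right-hand side is constant, the product rule yields
\begin{equation*}
\frac{\partial c_{i:\bar{m}}}{\partial \xi^k}\, c^{\bar{m}:j} + c_{i:\bar{m}}\, \frac{\partial c^{\bar{m}:j}}{\partial \xi^k} = 0.
\end{equation*}
By the conventions in \eqref{eqn:c.derivatives} the first factor is exactly $\partial_{\xi^k} c_{i:\bar{m}} = c_{ik:\bar{m}}$. To solve for the wanted derivative I would then contract with $c^{\bar{\ell}:i}$ and sum over $i$, using the second relation $c^{\bar{\ell}:i} c_{i:\bar{m}} = \delta^{\bar{\ell}}_{\bar{m}}$ of \eqref{eqn:c.matrix.inverse} to collapse the second term to $\partial_{\xi^k} c^{\bar{\ell}:j}$. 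This gives
\begin{equation*}
\frac{\partial c^{\bar{\ell}:j}}{\partial \xi^k} = - c^{\bar{\ell}:i} c_{ik:\bar{m}} c^{\bar{m}:j},
\end{equation*}
which is the first asserted identity.

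The second formula is proved by the identical computation with $\partial/\partial\xi^k$ replaced by $\partial/\partial\eta'^{\bar{k}}$; the only change is that now $\partial_{\eta'^{\bar{k}}} c_{i:\bar{m}} = c_{i:\bar{k}\bar{m}}$, again by \eqref{eqn:c.derivatives}, so the same contraction argument yields $\partial_{\eta'^{\bar{k}}} c^{\bar{\ell}:j} = - c^{\bar{\ell}:i} c_{i:\bar{k}\bar{m}} c^{\bar{m}:j}$. The main (and essentially only) obstacle is purely notational: one must keep track of which indices are barred versus unbarred and respect the differentiation conventions of \eqref{eqn:c.derivatives}, so that the third-order symbols $c_{ik:\bar{m}}$ and $c_{i:\bar{k}\bar{m}}$ appear with the correct index pattern. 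Beyond that, the result is immediate from differentiating \eqref{eqn:c.matrix.inverse}.
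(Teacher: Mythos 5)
Your proof is correct and is exactly the paper's intended argument: the paper introduces the lemma with ``Differentiating \eqref{eqn:c.matrix.inverse}, we obtain the following useful identities,'' i.e.\ the matrix-inverse derivative rule $\partial(A^{-1}) = -A^{-1}(\partial A)A^{-1}$ applied to $(c_{i:\bar{j}})$, which is precisely your differentiate-and-contract computation. No discrepancies; your write-up just makes the index bookkeeping explicit.
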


With these notations we are ready to express the dualistic structure of the $c$-divergence ${\bf D}$ on $G$. 

\begin{lemma} \label{lem:info.geo.coefficients}
	Under the primal coordinate system $\xi$ of $G$, we have
	\begin{equation} \label{eqn:c.divergence.coefficients}
		\begin{split}
			g_{ij}(\xi) = -c_{i : \bar{m}} \frac{\partial \eta^{\bar{m}}}{\partial \xi^j}, &\quad 	g^{ij}(\xi) = -\frac{\partial \xi^i}{\partial \eta^{\bar{m}}} c^{\bar{m}: j}, \\
			\Gamma_{ijk}(\xi) = - c_{ij:\bar{m}} \frac{\partial \eta^{\bar{m}}}{\partial \xi^k}, &\quad 	\Gamma_{ij}\mathstrut^{k}(\xi) = c_{ij:\bar{m}} c^{\bar{m}:k}.
		\end{split}
	\end{equation}
	where mixed derivatives such as $c_{i : \bar{m}}$ are evaluated at $(\xi, \eta) = (\xi, \eta(\xi))$, so that the coefficients are functions of $\xi$.
	
	Similarly, the coefficients of $g$ and $\nabla^*$ under the dual coordinate system $\eta$ are given as follows:
	\begin{equation}  \label{eqn:c.divergence.coefficients2}
		\begin{split}
			g_{\bar{i}\bar{j}}(\eta) =  - \frac{\partial \xi^m}{\partial \eta^{\bar{j}}} c_{m: \bar{i}},  &\quad g^{\bar{i}\bar{j}}(\eta) = - c^{\bar{j} : m} \frac{\partial \eta^{\bar{i}}}{\partial \xi^m}, \\
			\Gamma_{\bar{i}\bar{j}\bar{k}}^*(\eta) = -\frac{\partial \xi^m}{\partial \eta^{\bar{k}}} c_{m:\bar{i}\bar{j}}, &\quad \Gamma_{\bar{i}\bar{j}}^*\mathstrut^{\bar{k}}(\eta) = c^{\bar{k}:m} c_{m:\bar{i}\bar{j}} .
		\end{split}
	\end{equation}
\end{lemma}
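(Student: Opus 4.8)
The plan is to apply Eguchi's formulas \eqref{eqn:dualistic.structure.coefficients} directly to the coordinate expression of the $c$-divergence and simply read off the coefficients. First I would write the $c$-divergence in the primal chart. Since $x = (p, f(p))$ and $x' = (p', f(p'))$ lie on $G$, the point $q' = f(p')$ has $M'$-coordinate $\eta(\xi')$, so from \eqref{eqn:c.divergence},
\begin{equation*}
{\bf D}[\xi : \xi'] = c(\xi, \eta(\xi')) - \varphi(\xi) - \psi(\eta(\xi')).
\end{equation*}
The key structural observation is that $\varphi(\xi)$ depends only on the first argument and $\psi(\eta(\xi'))$ only on the second, so both drop out of any \emph{mixed} $\xi$--$\xi'$ derivative. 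This is precisely the feature that will make the whole computation collapse onto the cost $c$ and the transport Jacobian.

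Next I would carry out the differentiations. For $g_{ij}$, differentiating once in $\xi'^j$ produces a term $c_{:\bar m}(\xi,\eta(\xi'))\,\frac{\partial\eta^{\bar m}}{\partial\xi'^j}$ together with a $\psi$-term that is a function of $\xi'$ alone; a further $\xi^i$-derivative annihilates the $\psi$-term and leaves $c_{i:\bar m}(\xi,\eta(\xi'))\,\frac{\partial\eta^{\bar m}}{\partial\xi'^j}$. Setting $\xi = \xi'$ and inserting the sign from \eqref{eqn:dualistic.structure.coefficients} gives the claimed $g_{ij} = -c_{i:\bar m}\,\frac{\partial\eta^{\bar m}}{\partial\xi^j}$. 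The computation for $\Gamma_{ijk}$ is identical but with one additional $\xi^j$-derivative acting on the cost term, yielding $-c_{ij:\bar m}\,\frac{\partial\eta^{\bar m}}{\partial\xi^k}$. Throughout, the only inputs are the chain rule and the fact that the single-argument potentials contribute nothing to the mixed derivatives.

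For the raised-index identities I would first verify the inverse-metric formula $g^{ij} = -\frac{\partial\xi^i}{\partial\eta^{\bar m}}c^{\bar m:j}$ by contracting it against $g_{ik}$: the two Jacobians collapse via $\frac{\partial\eta^{\bar m}}{\partial\xi^k}\frac{\partial\xi^k}{\partial\eta^{\bar n}} = \delta^{\bar m}_{\bar n}$ and the two cost factors collapse via the non-degeneracy relation \eqref{eqn:c.matrix.inverse}, producing $\delta^j_i$. Then $\Gamma_{ij}\mathstrut^{k} = \Gamma_{ijm}\,g^{mk}$ simplifies by the same two collapses, the Jacobian pair giving a Kronecker delta and the cost pair surviving, to leave $c_{ij:\bar m}\,c^{\bar m:k}$.

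Finally, the dual formulas \eqref{eqn:c.divergence.coefficients2} follow by the symmetric computation in the dual chart: writing ${\bf D}[\eta:\eta'] = c(\xi(\eta),\eta') - \varphi(\xi(\eta)) - \psi(\eta')$ and applying the dual Eguchi formulas—in which $\nabla^*$ differentiates twice in the second slot—the roles of $(\xi,\varphi)$ and $(\eta',\psi)$ are interchanged and the same bookkeeping goes through verbatim. I expect no genuine obstacle here: the argument is entirely routine calculus driven by the vanishing of single-argument terms. The one point requiring mild care is matching the index placement in the stated formulas; for instance $g_{\bar i\bar j} = -\frac{\partial\xi^m}{\partial\eta^{\bar j}}c_{m:\bar i}$ emerges only after invoking the symmetry $g_{\bar i\bar j} = g_{\bar j\bar i}$ of the metric produced by the direct differentiation.
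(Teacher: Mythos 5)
Your proposal is correct and matches the paper's own proof, which simply says to express the $c$-divergence in local coordinates and apply \eqref{eqn:dualistic.structure.coefficients} via direct differentiation; you have just carried out that computation in detail, including the verification of the inverse-metric and raised-index formulas via \eqref{eqn:c.matrix.inverse}. Your observation that the dual-chart metric formula requires invoking the symmetry $g_{\bar{i}\bar{j}} = g_{\bar{j}\bar{i}}$ is also consistent with the paper's remark after the lemma that these coefficient matrices are symmetric by construction even though the formulas do not make it apparent.
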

\begin{proof}
	Express the $c$-divergence \eqref{eqn:c.divergence} in terms of local coordinate. Then \eqref{eqn:c.divergence.coefficients} and \eqref{eqn:c.divergence.coefficients2} follow from the definition \eqref{eqn:dualistic.structure.coefficients} via direct differentiation. Computations for the special case where ${\bf D}$ is an $L^{(\alpha)}$-divergence can be found in \cite{PW18, W18}.
\end{proof}

Note that $(g_{ij})$ and $(g^{\bar{i}\bar{j}})$ in \eqref{eqn:c.divergence.coefficients} and \eqref{eqn:c.divergence.coefficients2} are by construction symmetric even though this may not be apparent from the formulas. Next we consider the pseudo-Riemannian metric $h$ introduced in Section \ref{sec:pseudo.Riemannian}. The following result is taken from \cite[Lemma 4.1]{KM10}.

\begin{lemma} \label{lem:Levi.Civita.connection}
	Equip the product manifold $M \times M'$ with the pseudo-Riemannian metric $h$. Let $\bar{\nabla}$ be the Levi-Civita connection induced by $h$ and let $\bar{\Gamma}_{\cdot \cdot}\mathstrut^{\cdot}$ be its Christoffel symbols. In the local coordinates $\xi = (\xi^1, \ldots, \xi^n)$ for $M$ and $\eta' = (\eta'^{\bar{1}}, \ldots, \eta'^{\bar{n}})$ for $M'$, the only non-vanishing Christoffel symbols are
	\begin{equation} \label{eqn:h.Christoffel.symbols}
		\bar{\Gamma}_{ij}\mathstrut^{k}(\xi, \eta') = c_{ij:\bar{m}} c^{\bar{m}:k}  \quad \text{and}  \quad \bar{\Gamma}_{\bar{i}\bar{j}}\mathstrut^{\bar{k}}(\xi, \eta') =  c^{\bar{k}:m} c_{m:\bar{i}\bar{j}},
	\end{equation}
	where the derivatives are evaluated at $(\xi, \eta')$.
\end{lemma}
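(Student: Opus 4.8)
The plan is to compute the Christoffel symbols directly from the Koszul formula
\[
\bar{\Gamma}_{AB}{}^{C} = \tfrac{1}{2} h^{CD}\left(\partial_A h_{BD} + \partial_B h_{AD} - \partial_D h_{AB}\right),
\]
where the capital indices $A,B,C,D$ range over all $2n$ coordinates $(\xi^1,\ldots,\xi^n,\eta'^{\bar 1},\ldots,\eta'^{\bar n})$, exploiting the block-antidiagonal structure of $h$ recorded in \eqref{eqn:metric.h}. Reading off the components, the only nonvanishing entries of the metric are the mixed ones, $h_{i\bar{j}} = h_{\bar{j}i} = -\tfrac{1}{2} c_{i:\bar{j}}$, while the pure blocks vanish identically, $h_{ij} \equiv 0$ and $h_{\bar{i}\bar{j}} \equiv 0$. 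This vanishing is what ultimately forces the two claimed symbols to be the only survivors.

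First I would invert the metric. Since $h$ is block-antidiagonal, so is $h^{-1}$, and a one-line check against \eqref{eqn:c.matrix.inverse} gives $h^{i\bar{j}} = h^{\bar{j}i} = -2 c^{\bar{j}:i}$ with $h^{ij} = h^{\bar{i}\bar{j}} = 0$; indeed $h^{i\bar{m}} h_{\bar{m}j} = (-2c^{\bar{m}:i})(-\tfrac{1}{2} c_{j:\bar{m}}) = \delta_j^i$. Next I would record the derivatives of the surviving components: because only $h_{i\bar{j}}$ is nonzero, the only derivatives that can enter the formula are $\partial_{\xi^k} h_{i\bar{j}} = -\tfrac{1}{2} c_{ik:\bar{j}}$ and $\partial_{\eta'^{\bar{k}}} h_{i\bar{j}} = -\tfrac{1}{2} c_{i:\bar{j}\bar{k}}$, which are symmetric in $(i,k)$ and in $(\bar{j},\bar{k})$ respectively by equality of mixed partials.

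Then I would substitute into the Koszul formula case by case. The inverse metric pairs a barred with an unbarred index, so producing an upper index $C = k$ forces a contraction against $h^{k\bar{m}}$, reducing the bracket to $\partial_A h_{B\bar{m}} + \partial_B h_{A\bar{m}} - \partial_{\bar{m}} h_{AB}$, and symmetrically for $C = \bar{k}$. Taking $A=i, B=j$ both unbarred, the third term drops since $h_{ij}\equiv 0$ and the first two coincide, giving $\bar{\Gamma}_{ij}{}^{k} = c_{ij:\bar{m}} c^{\bar{m}:k}$; the mirror case $A=\bar{i}, B=\bar{j}$ yields $\bar{\Gamma}_{\bar{i}\bar{j}}{}^{\bar{k}} = c^{\bar{k}:m} c_{m:\bar{i}\bar{j}}$. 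All remaining symbols vanish: for $A=i,B=j$ the barred-output version $\bar{\Gamma}_{ij}{}^{\bar{k}}$ contracts only against the pure blocks $h_{jm}, h_{im}, h_{ij}$, which are zero, and the genuinely mixed cases $\bar{\Gamma}_{i\bar{j}}{}^{k}$ and $\bar{\Gamma}_{i\bar{j}}{}^{\bar{k}}$ vanish through an exact cancellation between the two surviving derivative terms.

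The computation is elementary, so there is no deep obstacle; the main point is index bookkeeping. The one place demanding care is the cancellation that kills the mixed symbols $\bar{\Gamma}_{i\bar{j}}{}^{\,\cdot}$, which hinges on the symmetries $c_{i:\bar{j}\bar{m}} = c_{i:\bar{m}\bar{j}}$ and $c_{ik:\bar{m}} = c_{ki:\bar{m}}$ together with the vanishing of the pure blocks. I would note in passing that, unlike the later curvature computation, this step does not invoke the derivative identities of Lemma \ref{lem:c.inverse.derivative}, since only the metric components, and not the inverse metric, are differentiated here.
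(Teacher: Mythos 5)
Your proposal is correct. The paper itself gives no proof of this lemma, deferring instead to \cite[Lemma 4.1]{KM10}, and your direct coordinate computation is precisely the standard calculation underlying that reference: the block-antidiagonal form \eqref{eqn:metric.h} gives $h_{i\bar{j}} = -\tfrac12 c_{i:\bar{j}}$ with vanishing pure blocks, the inverse $h^{i\bar{j}} = -2c^{\bar{j}:i}$ follows from \eqref{eqn:c.matrix.inverse}, and the case analysis in the Christoffel formula (including the cancellation of the mixed symbols $\bar{\Gamma}_{i\bar{j}}{}^{\cdot}$ via equality of mixed partials) is exactly right. Your closing observation that Lemma \ref{lem:c.inverse.derivative} is not needed here is also accurate; it only enters later, in the curvature computation of Lemma \ref{lem:R.bar.coeff}.
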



\subsection{Metrics and connections}
We are now ready to connect the two geometries, namely $(G, g, \nabla, \nabla^*)$ and $(M \times M', h)$. We first give two results concerning the metrics and the connections that are intuitive and easy to state; the curvature tensors will be studied in Section \ref{sec:curvature.tensors}.

First we consider the metrics. Recall that we have the canonical inclusion and decomposition
\begin{equation} \label{eqn:tangent.space.decomp}
	T_{(\xi, \eta)} G \subset T_{(\xi, \eta)} (M \times M') \equiv T_{\xi} M \oplus T_{\eta} M',
\end{equation}
where we again abuse notations and identify points with their coordinates. A generic element $v$ of $T_{(\xi, \eta)} G$ has the form
\begin{equation} \label{eqn:tangent.vector.submanifold}
	v = a^i  \left.\frac{\partial}{\partial \xi^i}\right|_{\xi} + a^i \frac{\partial \eta^{\bar{j}}}{\partial \xi^i} \left. \frac{\partial}{\partial \eta'^{\bar{j}}} \right|_{\eta},
\end{equation}
where $(a^1, \ldots, a^n) \in \mathbb{R}^n$ and $\frac{\partial \eta}{\partial \xi}$ is the Jacobian of the coordinate expression of $f$.

\begin{theorem} [$g$ as restriction of $h$ to $G$] \label{thm:metric}
	For any $(p, f(p)) \in G$ we have
	\begin{equation} \label{eqn:metric.restriction}
		\left. h \right|_{ (T_{(p, f(p)))} G)^2} = g.
	\end{equation}
	Thus the information geometric Riemannian metric $g$ is the restriction of $h$ to $G$.
\end{theorem}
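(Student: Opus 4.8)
The plan is to verify \eqref{eqn:metric.restriction} by a direct computation in the local coordinates $\xi = (\xi^1,\dots,\xi^n)$ on $M$ and $\eta' = (\eta'^{\bar 1},\dots,\eta'^{\bar n})$ on $M'$, combining the explicit quadratic form \eqref{eqn:pseudo.Riemannian.metric} for $h$ with the coordinate formula for $g_{ij}$ supplied by Lemma \ref{lem:info.geo.coefficients}. Since both $h|_{(T_{(p,f(p))}G)^2}$ and $g$ are symmetric $2$-tensors on $G$, it suffices to compare the associated quadratic forms $v \mapsto h(v,v)$ and $v \mapsto g(v,v)$ for an arbitrary $v \in T_{(p,f(p))}G$; the full bilinear statement then follows by polarization.

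The central step is to take a generic $v \in T_{(p,f(p))}G$ in the form \eqref{eqn:tangent.vector.submanifold}, so that its components along $\partial/\partial\xi^i$ are $a^i$ while those along $\partial/\partial\eta'^{\bar j}$ are the slaved quantities $b^{\bar j} = a^k\,\partial\eta^{\bar j}/\partial\xi^k$, reflecting the constraint $\eta = f(\xi)$ on $G$. Substituting these into \eqref{eqn:pseudo.Riemannian.metric} yields
\begin{equation*}
h(v,v) = -c_{i:\bar j}\, a^i b^{\bar j} = -c_{i:\bar j}\, \frac{\partial \eta^{\bar j}}{\partial \xi^k}\, a^i a^k,
\end{equation*}
where the mixed derivatives $c_{i:\bar j}$ are evaluated at $(\xi,\eta(\xi)) \in G$, matching the convention of Lemma \ref{lem:info.geo.coefficients}.

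It then remains to recognize the coefficient of $a^i a^k$ as $g_{ik}(\xi)$. Indeed, the primal-coordinate formula $g_{ij}(\xi) = -c_{i:\bar m}\,\partial\eta^{\bar m}/\partial\xi^j$ from \eqref{eqn:c.divergence.coefficients} shows that $h(v,v) = g_{ik}(\xi)\,a^i a^k = g(v,v)$, which is exactly the claimed identity at the level of quadratic forms, and hence \eqref{eqn:metric.restriction} after polarization.

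I expect the computation itself to be routine; the only genuine care lies in two bookkeeping points. First, the decomposition \eqref{eqn:tangent.space.decomp} must be used to express a tangent vector of the submanifold $G$ correctly, so that its $M'$-components are tied to its $M$-components through the Jacobian of $f$ — this coupling is precisely what converts the indefinite form $h$ into the positive-definite form $g$, and omitting it would break the proof. Second, one must ensure that the evaluation points and the differentiation conventions (all derivatives taken at $(\xi,\eta(\xi))$) are aligned between \eqref{eqn:pseudo.Riemannian.metric} and Lemma \ref{lem:info.geo.coefficients}; once they are, the two expressions agree termwise, so no analytic difficulty arises.
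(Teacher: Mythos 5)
Your proof is correct and is essentially identical to the paper's own argument: the paper's (one-line) proof likewise evaluates \eqref{eqn:pseudo.Riemannian.metric} at a point of $G$ with $\eta' = \eta$ and $v$ given by \eqref{eqn:tangent.vector.submanifold}, then compares the resulting expression $-c_{i:\bar{j}}\,\frac{\partial \eta^{\bar{j}}}{\partial \xi^k}\,a^i a^k$ with the primal-coordinate formula for $g_{ij}$ in \eqref{eqn:c.divergence.coefficients}. Your additional remarks (polarization to pass from quadratic to bilinear forms, and the alignment of evaluation points) are standard bookkeeping that the paper leaves implicit.
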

\begin{proof}
	Simply evaluate \eqref{eqn:pseudo.Riemannian.metric} where $\eta' = \eta$ and $v$ is given by \eqref{eqn:tangent.vector.submanifold}, and compare with \eqref{eqn:c.divergence.coefficients}. (Also see \eqref{eqn:c.divergence.and.cross.diff}.)
\end{proof}

Next consider the primal and dual connections $\nabla$, $\nabla^*$ on $G$ as well as  the Levi-Civita connection $\bar{\nabla}$ on $M \times M'$.

\begin{definition} [Projection maps] \label{def:projection.maps}
	We define projection maps $\pi_0, \pi_1 : M \times f(M) \rightarrow G$ by
	\begin{equation} \label{eqn:projection.maps}
		\pi_0(p, q') = (p, f(p)), \quad  \pi_1(p, q') = (f^{-1}(q'), q'),
	\end{equation}
	for $x = (p, q') \in M \times f(M)$.
\end{definition}

\begin{figure}[t!]
	\centering
	\begin{tikzpicture}[scale = 0.45]
		\draw (0, 0) to (10, 0);
		\draw (10, 0) to (10, 10);
		\draw (10, 10) to (0, 10);
		\draw (0, 10) to (0, 0);
		
		
		\draw[blue] (0, 1) to [bend left=40] (3, 4);
		\draw[blue] (3, 4) to [bend left=-10] (6, 5);
		\draw[blue] (6, 5) to [bend left=10] (8, 6);
		\draw[blue] (8, 6) to [bend left = -30] (10, 8);
		
		\node [blue, above] at (6.5, 5.5) {$G$};
		\node[circle, draw = black, fill = black,  inner sep=0pt, minimum size=3pt] at (3.5, 5) {};
		\draw[black, dashed] (3.5, 5) to (3.5, 4.05);
		\node[circle, draw = black, fill = black,  inner sep=0pt, minimum size=3pt] at (3.5, 4.05) {};
		\draw[black, dashed] (3.5, 5) to (6, 5);
		\node[circle, draw = black, fill = black,  inner sep=0pt, minimum size=3pt] at (6, 5) {};
		\node[above] at (3.5, 5) {$x = (p, q')$};
		\node[below] at (3.5, 4.05) {$\pi_0(x)$};	
		\node[right] at (6, 4.8) {$\pi_1(x)$};	
		
		\node [black, below] at (5, 0) {$M$};	
		\node [black, left] at (0, 5) {$M'$};	
		
		
	\end{tikzpicture}
	\caption{The projection maps $\pi_0, \pi_1 : M \times f(M) \rightarrow G$.} \label{fig:projections}
\end{figure}
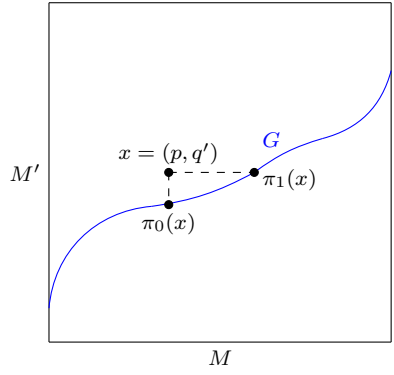

See Figure \ref{fig:projections} for an illustration. Motivated by this figure, we think of $\pi_0$ as the {\it vertical projection} and $\pi_1$ as the {\it horizontal projection} onto $G$. Note that if $v = v_0 \oplus v_1$ as in \eqref{eqn:tangent.space.decomp}, then $(\pi_i)_* v = v_i$, $i = 0, 1$ (the differential map).

Given a mapping $\pi : M \times f(M) \rightarrow G$ (which in our case is the projection $\pi_0$ or $\pi_1$) and the connection $\bar{\nabla}$ on $M \times M'$, we define an induced connection $\bar{\nabla}^{\pi}$ on $G$ as follows. Let $X, Y$ be given vector fields on $G$. For $x$ of $G$ fixed, we may extend $X$ and $Y$ to vector fields $\tilde{X}, \tilde{Y}$ in a neighborhood in $M \times M'$. So we may apply $\bar{\nabla}$ to $(\tilde{X}, \tilde{Y})$ near $x$ in $M \times \bar{M}$. Note that $ \bar{\nabla}_{\tilde{X}} \tilde{Y} |_x \in T_x (M \times M')$ is not necessarily tangent to $G$. We define
\begin{equation} \label{eqn:induced.connection}
	\left. \bar{\nabla}_X^{\pi} Y \right|_x  := \pi_*(  \bar{\nabla}_{\tilde{X}} \tilde{Y} |_x ) \in T_x G,
\end{equation}
where $\pi_* : T(M \times M') \rightarrow TG$ is the differential of $\pi$. Since $X$ and $Y$ are tangent to $G$, it can be verified easily that \eqref{eqn:induced.connection} defines unambiguously a torsion-free affine connection on $G$. 

\begin{theorem} \label{thm:induced.connections}
	We have $\nabla = \bar{\nabla}^{\pi_0}$ and $\nabla^* = \bar{\nabla}^{\pi_1}$. \end{theorem}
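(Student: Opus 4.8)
The plan is to compute the induced connections $\bar{\nabla}^{\pi_0}$ and $\bar{\nabla}^{\pi_1}$ in the coordinate bases of $TG$ coming from the primal and dual coordinates, and to match the resulting Christoffel symbols against those of $\nabla$ and $\nabla^*$ recorded in Lemma~\ref{lem:info.geo.coefficients}. The whole argument rests on the coincidence between Lemma~\ref{lem:info.geo.coefficients} and Lemma~\ref{lem:Levi.Civita.connection}: the ``pure-$M$'' Christoffel symbols $\bar{\Gamma}_{ij}\mathstrut^{k} = c_{ij:\bar{m}} c^{\bar{m}:k}$ of $\bar{\nabla}$ agree on $G$ with the primal coefficients $\Gamma_{ij}\mathstrut^{k}$ of $\nabla$, and the ``pure-$M'$'' symbols $\bar{\Gamma}_{\bar{i}\bar{j}}\mathstrut^{\bar{k}} = c^{\bar{k}:m} c_{m:\bar{i}\bar{j}}$ agree with the dual coefficients $\Gamma_{\bar{i}\bar{j}}^*\mathstrut^{\bar{k}}$ of $\nabla^*$. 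Everything else is about showing that the projection $\pi_0$ (resp.\ $\pi_1$) selects exactly these blocks.

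For $\nabla = \bar{\nabla}^{\pi_0}$ I would work in the primal coordinates. Denote by $e_i = \partial_{\xi^i} + \frac{\partial \eta^{\bar{j}}}{\partial \xi^i}\partial_{\eta'^{\bar{j}}}$ the image in $T(M\times M')$ of the coordinate field $\partial/\partial\xi^i$ on $G$ (this is \eqref{eqn:tangent.vector.submanifold} with $a = \delta_i$). The key structural fact is that $\pi_0(p,q') = (p,f(p))$ does not depend on $q'$, so in coordinates its differential sends $\partial_{\xi^k} \mapsto e_k$ and annihilates every $M'$-direction, $\partial_{\eta'^{\bar{k}}} \mapsto 0$. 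I would extend $e_i$ to a neighbourhood in $M\times M'$ by the convenient choice $\tilde{e}_i = \partial_{\xi^i} + A_i^{\bar{j}}(\xi)\,\partial_{\eta'^{\bar{j}}}$ with $A_i^{\bar{j}}(\xi) = \partial\eta^{\bar{j}}/\partial\xi^i$ taken constant in $\eta'$; since \eqref{eqn:induced.connection} is extension-independent the choice is immaterial. Using the Christoffel symbols of Lemma~\ref{lem:Levi.Civita.connection}, the $M$-component of $\bar{\nabla}_{\tilde{e}_i}\tilde{e}_j$ is exactly $\bar{\Gamma}_{ij}\mathstrut^{k}$ (the only nonzero symbol with an upper $M$-index), while its $M'$-component is the combination $\partial_{\xi^i}\partial_{\xi^j}\eta^{\bar{k}} + \bar{\Gamma}_{\bar{l}\bar{m}}\mathstrut^{\bar{k}}A_i^{\bar{l}}A_j^{\bar{m}}$. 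Applying $(\pi_0)_*$ discards the entire $M'$-component and converts the $M$-component into $\bar{\Gamma}_{ij}\mathstrut^{k}e_k$; hence $\bar{\nabla}^{\pi_0}_{e_i}e_j = \bar{\Gamma}_{ij}\mathstrut^{k}e_k = \Gamma_{ij}\mathstrut^{k}e_k = \nabla_{e_i}e_j$, which is the claim.

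The identity $\nabla^* = \bar{\nabla}^{\pi_1}$ follows by the symmetric computation in the dual coordinates $\eta$, with the roles of $M$ and $M'$ interchanged: writing $\bar{e}_{\bar{i}} = \partial_{\eta'^{\bar{i}}} + \frac{\partial\xi^j}{\partial\eta^{\bar{i}}}\partial_{\xi^j}$ for the dual coordinate field, the horizontal projection $\pi_1(p,q') = (f^{-1}(q'),q')$ is independent of $p$, so $(\pi_1)_*$ annihilates the $M$-directions and sends $\partial_{\eta'^{\bar{k}}} \mapsto \bar{e}_{\bar{k}}$. The same bookkeeping leaves only the pure-$M'$ block $\bar{\Gamma}_{\bar{i}\bar{j}}\mathstrut^{\bar{k}} = \Gamma_{\bar{i}\bar{j}}^*\mathstrut^{\bar{k}}$, giving $\bar{\nabla}^{\pi_1}_{\bar{e}_{\bar{i}}}\bar{e}_{\bar{j}} = \nabla^*_{\bar{e}_{\bar{i}}}\bar{e}_{\bar{j}}$.

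I expect the only real care to be needed in the bookkeeping of the push-forward: one must verify that $(\pi_0)_*$ genuinely annihilates $\partial_{\eta'^{\bar{k}}}$ and acts as $\partial_{\xi^k}\mapsto e_k$, so that all the extension-dependent terms (the second derivatives of $f$ and the $\bar{\Gamma}_{\bar{l}\bar{m}}\mathstrut^{\bar{k}}$ contributions) are dropped precisely because they lie in the $M'$-directions. This is exactly the mechanism that makes the induced connection well defined and, simultaneously, singles out the relevant block of $\bar{\Gamma}$; once it is checked, the matching of coefficients from the two lemmas finishes the proof with no further computation.
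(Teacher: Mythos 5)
Your proof is correct and follows essentially the same route as the paper's: compute $\bar{\nabla}_{\tilde X}\tilde Y$ via the block-diagonal Christoffel symbols of Lemma~\ref{lem:Levi.Civita.connection}, note that $(\pi_0)_*$ (resp.\ $(\pi_1)_*$) annihilates the $M'$- (resp.\ $M$-) directions, and identify the surviving block with $\Gamma_{ij}\mathstrut^{k}$ (resp.\ $\Gamma_{\bar{i}\bar{j}}^*\mathstrut^{\bar{k}}$) via Lemma~\ref{lem:info.geo.coefficients}. The only cosmetic difference is that you specialize to coordinate basis fields with an explicit extension (legitimately invoking the extension-independence of \eqref{eqn:induced.connection}), whereas the paper runs the identical computation for arbitrary vector fields and extensions.
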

\begin{proof}
	Consider the coordinate system $(\xi, \eta')$ on $M \times M'$ as in Section \ref{sec:coeff.prelim}. Write $\partial_i = \frac{\partial}{\partial \xi^i}$ and $\partial_{\bar{i}} = \frac{\partial}{\partial \eta'^{\bar{i}}}$. Let $X$ and $Y$ be vector fields on $G$, and let their local extensions in $M \times M'$ be 
	\[
	\tilde{X} = \tilde{X}^i \partial_i +  \tilde{X}^{\bar{i}} \partial_{\bar{i}}, \quad \tilde{Y} = \tilde{Y}^i \partial_i +  \tilde{Y}^{\bar{i}} \partial_{\bar{i}}.
	\]
	
	By Lemma \ref{lem:Levi.Civita.connection}, the covariant derivative $\bar{\nabla}_{\tilde{X}} \tilde{Y}$ is given by
	\begin{equation} \label{eqn:connection.ideneity}
		\begin{split}
			\bar{\nabla}_{\tilde{X}} \tilde{Y} &= ( \tilde{X} \tilde{Y}^k + \tilde{X}^i \tilde{Y}^j \bar{\Gamma}_{ij} \mathstrut^{k} ) \partial_k + ( \tilde{X} \tilde{Y}^{\bar{k}} + \tilde{X}^{\bar{i}} \tilde{Y}^{\bar{j}} \bar{\Gamma}_{\bar{i}\bar{j}} \mathstrut^{\bar{k}} ) \partial_{\bar{k}}.
		\end{split}
	\end{equation}
	Evaluating at the point $x = (p, f(p))$ and using the primal coordinates $\xi$ on $G$, we have 
	\begin{equation} \label{eqn:covariant.derivative.computation}
		\begin{split}
			& \bar{\nabla}_{\tilde{X}} \tilde{Y}|_x = ( X Y^k + X^i Y^j \bar{\Gamma}_{ij} \mathstrut^{k} ) \partial_k + ( X Y^{\bar{k}} + X^{\bar{i}} Y^{\bar{j}} \bar{\Gamma}_{\bar{i}\bar{j}^*} \mathstrut^{\bar{k}} ) \partial_{\bar{k}} \\
			&\Rightarrow (\pi_0)_* (  \bar{\nabla}_{\tilde{X}} \tilde{Y} |_x ) = ( X Y^k + X^i Y^j \bar{\Gamma}_{ij} \mathstrut^{k} ) \partial_k.
		\end{split}
	\end{equation}
	By Lemma \ref{lem:info.geo.coefficients}, we have $\bar{\Gamma}_{ij} \mathstrut^{k}(\xi, \eta) = \Gamma_{ij}\mathstrut^{k}(\xi)$. So the last expression is equal to $\nabla_X Y$. Similarly, we have $\nabla^* = \bar{\nabla}^{\pi_1}$.
\end{proof}


Recall that $T_{(p, f(p))} G \subset T_{(p, f(p))} (M \times M') = T_p M \oplus T_{f(p)} M'$. Using this decomposition, define mappings $\iota_0, \iota_1: TG \rightarrow T(M \times M')$ as follows. If $v = v_0 \oplus v_1 \in T_{(p, f(p))} G \subset T_p M \oplus T_{f(p)} M'$, define $\iota_0(v), \iota_1(v) \in T_pM \oplus T_{f(p)} M'$ by
\begin{equation} \label{eqn:iota.maps}
	\iota_0(v) = v_0 \oplus 0, \quad \iota_1(v) = 0 \oplus v_1.
\end{equation}
In coordinates, if $v = a^i \partial_i + a^{\bar{i}} \partial_{\bar{i}}$, then $\iota_0(v) = a^i \partial_i + 0$ and $\iota_1(v) = 0 + a^{\bar{i}} \partial_{\bar{i}}$. Geometrically, $\iota_0(v)$ and $\iota_1(v)$ are respectively the horizontal and vertical components of $v$ in $T (M \times M')$. Now for vector fields $X, Y$ on $G$, we may rewrite the identify \eqref{eqn:connection.ideneity} in the form
\begin{equation} \label{eqn:connection.identity.intrinsic}
	\underbrace{\bar{\nabla}_X Y}_\text{$\in T_pM \oplus T_{f(p)} M'$} = \underbrace{\iota_0 (\nabla_X Y)}_\text{$\in T_p M \oplus 0$} + \underbrace{\iota_1(\nabla_X^* Y)}_\text{$\in 0 \oplus T_{f(p)} M'$}.   
\end{equation}

\begin{example}[Geometry of Bregman divergence] \label{eg:Bregman.geometry}
	As an illustration of the relation between the two geometries, let us consider the dualistic geometry of Bregman divergence. From Example \ref{eg:Bregman.divergence}, this corresponds to the case where $M = M' = \mathbb{R}^n$ and $c$ is the quadratic cost $c(p, q') = \frac{1}{2} |p - q'|^2$. In Euclidean coordinates, the matrix of the pseudo-Riemannian metric $h$, given by \eqref{eqn:metric.h.quadratic.case}, is constant. By Lemma \ref{lem:Levi.Civita.connection}, the Christoffel symbols of the Levi-Civita connection $\bar{\nabla}$ all vanish, so the $\bar{\nabla}$-geodesics  are constant-velocity straight lines in $\mathbb{R}^n \times \mathbb{R}^n$.
	
	For the quadratic transport, the graph $G$ has the form
	\[
	G = \{(p, \nabla \phi(p)) : p \in  \mathbb{R}^n \},
	\]
	where $\phi$ is a convex function. (The results still hold if $\phi$ is only defined on an open convex domain in $\mathbb{R}^n$.) So $q = \nabla \phi(p)$ is the dual coordinates obtained from the Legendre transformation \cite[Chapter 1]{A16} or equivalently the Brenier map. From \eqref{eqn:covariant.derivative.computation}, the Christoffel symbols of $\nabla$ (resp.~$\nabla^*$) in the primal (resp.~dual) coordinates vanish. So the primal (resp.~dual) geodesics on $G$ are straight lines in the primal (resp.~dual) coordinates. Thus we recover the classic dually flat geometry.   Also, from the first equation in \eqref{eqn:c.divergence.coefficients}, since $(c_{i:\bar{j}}) = -I$ and $\frac{\partial q}{\partial p} = D^2 \phi$, the Riemannian metric is given in primal coordinates by $(g_{ij}(p)) = D^2 \phi(p)$, the Hessian of $\phi$.
\end{example}

\subsection{Curvature tensors} \label{sec:curvature.tensors}
Next we study the Riemann curvature tensors $\bar{R}$ of $\bar{\nabla}$ on $M \times M'$, and $R, R^*$ of $\nabla$ and $\nabla^*$ respectively on $G$. To fix the notations, we define the Riemann curvature tensor (say for the primal connection $\nabla$) by
\[
R(X, Y)Z = \nabla_X \nabla_Y Z - \nabla_Y \nabla_X Z - \nabla_{[X, Y]} Z,
\]
where $[X, Y]$ is the Lie bracket. In coordinates, we write
\[
R_{ijk\ell} = g(R(\partial_i, \partial_j)\partial_k, \partial_{\ell})
\]
and ${R_{ijk}}^{\ell} = R_{ijkm} g^{m\ell}$, so that $R(\partial_i, \partial_j) \partial_k = {R_{ijk}}^{\ell} \partial_{\ell}$. We have 
\begin{equation} \label{eqn:RC.tensor.coeff}
	{R_{ijk}}^{\ell} = \partial_i {\Gamma_{jk}}^{\ell } - \partial_j {\Gamma_{ik}}^{\ell } + {\Gamma_{jk}}^m {\Gamma_{im}}^{\ell } - {\Gamma _{ik}}^m {\Gamma_{jm}}^{\ell }.
\end{equation}
See e.g.~\cite[Section 5.8]{A16}.
The notations for $R^*$ (on $G$) and $\bar{R}$ (on $M \times M'$) are analogous. Note that for $\bar{R}$ the indices run through both $\xi$ and $\eta'$.

\begin{lemma} \label{lem:R.bar.coeff}
	In the coordinates $(\xi, \eta')$, the coefficients of $\bar{R}$ are zero unless the number of unbarred and barred indices is equal, in which case the coefficient can be inferred from $R_{ij\bar{k}\bar{\ell}} = 0$ and
	\begin{equation} \label{eqn:R.bar.components}
		R_{i\bar{j}\bar{k}\ell}(\xi, \eta') = \frac{1}{2} \left( -c_{i\ell:\bar{j}\bar{k}} +  c_{i\ell:\bar{\beta}} c^{\bar{\beta}:\alpha} c_{\alpha:\bar{j}\bar{k}} \right),
	\end{equation}
	using the symmetries of the curvature tensor.
\end{lemma}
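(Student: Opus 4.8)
The plan is to work directly from the coordinate formula \eqref{eqn:RC.tensor.coeff} for the curvature, extended so that the indices run over both the unbarred ($\xi$) and barred ($\eta'$) blocks, and to exploit the rigid structure of the Christoffel symbols recorded in Lemma \ref{lem:Levi.Civita.connection}. The crucial feature there is that the only nonvanishing symbols, $\bar{\Gamma}_{ij}\mathstrut^{k}$ and $\bar{\Gamma}_{\bar{i}\bar{j}}\mathstrut^{\bar{k}}$, are ``type-preserving'': both lower indices and the upper index carry the same type (all unbarred, or all barred). Inserting this into the mixed-index version of \eqref{eqn:RC.tensor.coeff} for $\bar{R}_{ABC}\mathstrut^{E}$ (lower $A,B,C$, upper $E$), I would first check term by term that a summand can survive only when the third lower index $C$ and the upper index $E$ share a type; indeed each of the two derivative terms forces $C$ and $E$ to agree, while the two quadratic terms force all of $A,B,C,E$ to agree. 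Hence at the $(3,1)$ level one has $\bar{R}_{ABC}\mathstrut^{E}=0$ unless $C$ and $E$ are of the same type.

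Next I would lower the upper index with the metric. Since \eqref{eqn:metric.h} is block anti-diagonal, $h_{AB}\neq 0$ only when $A$ and $B$ are of opposite type, so lowering \emph{flips} the type of the contracted slot. Combined with the previous observation, the fully lowered coefficient $\bar{R}_{ABCD}=\bar{R}_{ABC}\mathstrut^{E}h_{ED}$ can be nonzero only if $C$ and $D$ are of opposite type; and then the pair-exchange symmetry $\bar{R}_{ABCD}=\bar{R}_{CDAB}$ of the Levi-Civita curvature forces $A$ and $B$ to be of opposite type as well. Therefore a nonzero coefficient must have exactly two barred and two unbarred indices, which is the first assertion. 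The same symmetries dispose of the shape claims: the component $R_{ij\bar{k}\bar{\ell}}$ has both barred indices in its last pair ($C,D$ of the same type), hence vanishes, while every other $(2,2)$ arrangement is brought, by the antisymmetry within each pair together with the pair exchange, to $\pm R_{i\bar{j}\bar{k}\ell}$. This reduces the lemma to computing that single representative.

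For the representative I would use that, among the four terms of the curvature formula for $\bar{R}_{i\bar{j}\bar{k}}\mathstrut^{\bar{\ell}}$, the type constraints kill all but the first, leaving
\[
\bar{R}_{i\bar{j}\bar{k}}\mathstrut^{\bar{m}}=\partial_{\xi^{i}}\bar{\Gamma}_{\bar{j}\bar{k}}\mathstrut^{\bar{m}}=\partial_{\xi^{i}}\!\left(c^{\bar{m}:p}c_{p:\bar{j}\bar{k}}\right)=\left(\partial_{\xi^{i}}c^{\bar{m}:p}\right)c_{p:\bar{j}\bar{k}}+c^{\bar{m}:p}c_{ip:\bar{j}\bar{k}}.
\]
Applying the inverse-derivative identity \eqref{eqn:c.inverse.derivatives} to $\partial_{\xi^{i}}c^{\bar{m}:p}$ and then lowering $\bar{m}$ against $h_{\bar{m}\ell}=-\tfrac{1}{2}c_{\ell:\bar{m}}$, the contraction $c_{\ell:\bar{m}}c^{\bar{m}:p}=\delta^{p}_{\ell}$ collapses everything to $\tfrac{1}{2}\bigl(-c_{i\ell:\bar{j}\bar{k}}+c_{i\ell:\bar{\beta}}c^{\bar{\beta}:\alpha}c_{\alpha:\bar{j}\bar{k}}\bigr)$, which is exactly \eqref{eqn:R.bar.components}. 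I expect the conceptual steps — the type bookkeeping and the reduction by symmetries — to be routine; the one place demanding care, and the main source of potential error, is this final computation, where one must correctly single out the surviving term, track the factor $-\tfrac{1}{2}$ and the index placement introduced when lowering with $h$, and apply \eqref{eqn:c.inverse.derivatives} with respect to the correct ($\xi^{i}$) variable.
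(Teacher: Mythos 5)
Your proof is correct and takes essentially the same route as the paper: the paper's proof simply defers the computation to \cite[Lemma 4.1]{KM10}, noting that it combines Lemma \ref{lem:Levi.Civita.connection}, Lemma \ref{lem:c.inverse.derivative} and \eqref{eqn:RC.tensor.coeff} together with the curvature symmetries, which is exactly what you carry out in detail. Your type bookkeeping, the reduction of all nonzero coefficients to $\bar{R}_{i\bar{j}\bar{k}\ell}$ via the pair symmetries, and the final contraction (including the sign coming from $h_{\bar{m}\ell} = -\tfrac{1}{2}c_{\ell:\bar{m}}$) all check out against \eqref{eqn:R.bar.components}.
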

\begin{proof}
	This is a computation (done in \cite[Lemma 4.1]{KM10}) involving Lemma \ref{lem:Levi.Civita.connection}, Lemma \ref{lem:c.inverse.derivative} and \eqref{eqn:RC.tensor.coeff}, which is straightforward once one is familiar with the notations. Note that our expressions differ from \cite[(4.2)]{KM10} by a sign; this is due to the difference in the tensorial notation \eqref{eqn:RC.tensor.coeff}.
	
	For later use we also record the symmetries of the coefficients: 
	\begin{equation} \label{eqn:R.symmetries}
		\bar{R}_{i\bar{j}\bar{k}l} = -\bar{R}_{\bar{j}i\bar{k}\ell} = -\bar{R}_{i\bar{j}\ell\bar{k}} = \bar{R}_{\bar{k}\ell i \bar{j}}.
	\end{equation}
	See for example \cite[Proposition 7.4]{L97} whose notations are the same as ours. These symmetries hold in both the Riemannian and pseudo-Riemannian cases. Note that \eqref{eqn:R.symmetries} gives all the coefficients of $R$ that are possibly nonzero.
\end{proof}

\begin{lemma} \label{lem:curvature.coefficients} { \ }
	\begin{enumerate}
		\item[(i)] In primal coordinates, we have
		\begin{equation} \label{eqn:primal.curvature}
			R_{ijk\ell}(\xi) = -2 \bar{R}_{i\bar{\alpha}\bar{\beta}k}(\xi, \eta(\xi)) \frac{\partial \eta^{\bar{\alpha}}}{\partial \xi^j}  \frac{\partial \eta^{\bar{\beta}}}{\partial \xi^{\ell}} + 2 \bar{R}_{j\bar{\alpha}\bar{\beta}k}(\xi, \eta(\xi)) \frac{\partial \eta^{\bar{\alpha}}}{\partial \xi^i}  \frac{\partial \eta^{\bar{\beta}}}{\partial \xi^{\ell}}.
		\end{equation}
		\item[(ii)] In dual coordinates, we have
		\begin{equation} \label{eqn:dual.curvature}
			{R_{\bar{i}\bar{j}\bar{k}\bar{\ell}}}^*(\eta) = -2 \bar{R}_{\alpha\bar{i}\bar{k} \beta}(\xi(\eta), \eta) \frac{\partial \xi^{\alpha}}{\partial \eta^{\bar{j}}}  \frac{\partial \xi^{\beta}}{\partial \eta^{\bar{\ell}}} + 2 \bar{R}_{\alpha\bar{j}\bar{k} \beta}(\xi(\eta), \eta) \frac{\partial \xi^{\alpha}}{\partial \eta^{\bar{i}}}  \frac{\partial \xi^{\beta}}{\partial \eta^{\bar{\ell}}} .
		\end{equation}
	\end{enumerate}
\end{lemma}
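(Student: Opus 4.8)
The plan is to prove both identities by a direct coordinate computation, built on the key observation (combining Lemmas \ref{lem:info.geo.coefficients} and \ref{lem:Levi.Civita.connection}) that in primal coordinates the Christoffel symbols of $\nabla$ on $G$ are exactly the unbarred ambient Christoffel symbols restricted to the graph, $\Gamma_{ij}{}^{k}(\xi) = \bar\Gamma_{ij}{}^{k}(\xi,\eta(\xi))$. The one genuine subtlety is that the partial derivatives in the curvature formula \eqref{eqn:RC.tensor.coeff} are derivatives \emph{along} $G$: since $\Gamma_{jk}{}^{\ell}(\xi) = \bar\Gamma_{jk}{}^{\ell}(\xi,\eta(\xi))$ depends on $\xi$ both directly and through $\eta=\eta(\xi)$, the chain rule splits $\partial_i \Gamma_{jk}{}^{\ell}$ into a ``direct'' piece $\partial\bar\Gamma_{jk}{}^{\ell}/\partial\xi^i$ and an ``extra'' piece $\frac{\partial\bar\Gamma_{jk}{}^{\ell}}{\partial\eta'^{\bar\alpha}}\frac{\partial\eta^{\bar\alpha}}{\partial\xi^i}$, and it is the latter that will manufacture the ambient curvature.

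For part (i), I would substitute $\Gamma = \bar\Gamma|_{G}$ into \eqref{eqn:RC.tensor.coeff} and perform this split. The direct pieces, together with the quadratic Christoffel terms (noting $\bar\Gamma_{jk}{}^{C}$ is nonzero only for unbarred $C$), assemble precisely into the all-unbarred ambient coefficient $\bar R_{ijk}{}^{\ell}$, which vanishes by Lemma \ref{lem:R.bar.coeff} since it has four unbarred and no barred indices. What survives is
\[
R_{ijk}{}^{\ell} = \frac{\partial \bar\Gamma_{jk}{}^{\ell}}{\partial \eta'^{\bar\alpha}}\frac{\partial \eta^{\bar\alpha}}{\partial \xi^i} - \frac{\partial \bar\Gamma_{ik}{}^{\ell}}{\partial \eta'^{\bar\alpha}}\frac{\partial \eta^{\bar\alpha}}{\partial \xi^j}.
\]
I would then recognize the remaining $\eta'$-derivative as a mixed ambient curvature component: expanding $\bar R_{\bar\alpha jk}{}^{\ell}$ via \eqref{eqn:RC.tensor.coeff} and using that every Christoffel symbol with mixed (barred/unbarred) lower indices vanishes by Lemma \ref{lem:Levi.Civita.connection}, all terms drop except $\partial_{\bar\alpha}\bar\Gamma_{jk}{}^{\ell}$, giving $\frac{\partial\bar\Gamma_{jk}{}^{\ell}}{\partial\eta'^{\bar\alpha}} = \bar R_{\bar\alpha jk}{}^{\ell}$. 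Hence $R_{ijk}{}^{\ell} = \bar R_{\bar\alpha jk}{}^{\ell}\frac{\partial\eta^{\bar\alpha}}{\partial\xi^i} - \bar R_{\bar\alpha ik}{}^{\ell}\frac{\partial\eta^{\bar\alpha}}{\partial\xi^j}$.

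It then remains to lower the free index and match signs and constants. Lowering with $g_{m\ell}(\xi) = -c_{m:\bar\gamma}\frac{\partial\eta^{\bar\gamma}}{\partial\xi^{\ell}}$ and writing $\bar R_{\bar\alpha jk}{}^{m} = \bar R_{\bar\alpha jk\bar\beta}\,h^{\bar\beta m}$, I would use that the inverse pseudo-metric satisfies $h^{\bar\beta m} = -2\,c^{\bar\beta:m}$ — the factor $2$ being exactly the origin of the $2$ in the statement, inherited from the $\frac12$ in \eqref{eqn:metric.h} — together with the contraction $c^{\bar\beta:m}c_{m:\bar\gamma} = \delta^{\bar\beta}_{\bar\gamma}$ from \eqref{eqn:c.matrix.inverse}. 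These collapse the $c$-factors into a single Jacobian and give
\[
R_{ijk\ell} = 2\,\bar R_{\bar\alpha jk\bar\beta}\,\frac{\partial \eta^{\bar\alpha}}{\partial \xi^i}\frac{\partial \eta^{\bar\beta}}{\partial \xi^{\ell}} - 2\,\bar R_{\bar\alpha ik\bar\beta}\,\frac{\partial \eta^{\bar\alpha}}{\partial \xi^j}\frac{\partial \eta^{\bar\beta}}{\partial \xi^{\ell}}.
\]
Applying the curvature symmetries \eqref{eqn:R.symmetries} in the form $\bar R_{\bar\alpha jk\bar\beta} = \bar R_{j\bar\alpha\bar\beta k}$ (swap the first pair, then the last pair) reproduces \eqref{eqn:primal.curvature}. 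For part (ii) I would run the identical argument with barred/unbarred indices and $(\xi,\nabla)$/$(\eta,\nabla^*)$ interchanged, using \eqref{eqn:c.divergence.coefficients2} and the second identity in \eqref{eqn:h.Christoffel.symbols}; here the natural index order $\bar R_{\alpha\bar j\bar k\beta}$ already coincides with that appearing in \eqref{eqn:dual.curvature}, so no symmetry step is needed.

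The computation is mechanical once organized, so the main obstacle is purely bookkeeping: correctly isolating the chain-rule term from the direct term along $G$, confirming that the direct part is exactly the vanishing all-unbarred $\bar R_{ijk}{}^{\ell}$ rather than an unrelated expression, and tracking the factor of $2$ and the signs through the raising and lowering of indices so that the final symmetrization matches the stated signs in \eqref{eqn:primal.curvature} and \eqref{eqn:dual.curvature}. The one conceptual point worth emphasizing in the write-up is \emph{why} the ambient curvature enters at all, namely that differentiating the restricted Christoffel symbols tangentially to $G$ is what converts an $\eta'$-partial of $\bar\Gamma$ into the mixed component $\bar R_{\bar\alpha jk}{}^{\ell}$.
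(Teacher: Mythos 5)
Your proposal is correct, and each step checks out against the paper's conventions: the identification $\Gamma_{ij}{}^{k}(\xi)=\bar{\Gamma}_{ij}{}^{k}(\xi,\eta(\xi))$ does follow from Lemmas \ref{lem:info.geo.coefficients} and \ref{lem:Levi.Civita.connection}; the identity $\partial\bar{\Gamma}_{jk}{}^{\ell}/\partial\eta'^{\bar{\alpha}}=\bar{R}_{\bar{\alpha}jk}{}^{\ell}$ holds precisely because all ambient Christoffel symbols with mixed lower indices vanish; $h^{\bar{\beta}m}=-2c^{\bar{\beta}:m}$ is the correct inverse of \eqref{eqn:metric.h}; and the symmetry swap $\bar{R}_{\bar{\alpha}jk\bar{\beta}}=\bar{R}_{j\bar{\alpha}\bar{\beta}k}$ from \eqref{eqn:R.symmetries} lands exactly on \eqref{eqn:primal.curvature}, with the dual case the mirror image as you say. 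The paper's proof rests on the same mechanism --- differentiating the restricted Christoffel symbols along the graph, so that the chain rule through $\eta=\eta(\xi)$ produces the ambient curvature --- but executes it by brute force in the style of \cite[Lemma 4.1]{KM10}: it substitutes $\Gamma_{jk}{}^{\ell}=c_{jk:\bar{m}}c^{\bar{m}:\ell}$ from Lemma \ref{lem:info.geo.coefficients}, differentiates using Lemma \ref{lem:c.inverse.derivative}, and matches the resulting raw $c$-derivative expressions against \eqref{eqn:R.bar.components}. Your organization avoids ever expanding the curvature in third derivatives of $c$, instead packaging the ``direct'' part of the chain rule as the vanishing all-unbarred ambient coefficient and the ``extra'' part as the mixed coefficient $\bar{R}_{\bar{\alpha}jk}{}^{\ell}$; this makes the sign, the factor of $2$, and the reason the ambient curvature enters visible as tensor identities rather than as cancellations among $c$-derivatives, while the paper's expansion has the advantage of being stated in exactly the notation of \eqref{eqn:R.bar.components}. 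One small point to tighten in your write-up: Lemma \ref{lem:R.bar.coeff} concerns the fully lowered coefficients, so to kill $\bar{R}_{ijk}{}^{\ell}$ you should first lower the upper index with $h$ --- since $h$ couples only barred with unbarred directions, this gives $\bar{R}_{ijk}{}^{\ell}=\bar{R}_{ijk\bar{\beta}}h^{\bar{\beta}\ell}$, and $\bar{R}_{ijk\bar{\beta}}$ vanishes because it has three unbarred and one barred index --- rather than citing ``four unbarred and no barred indices'' for the raised-index symbol; the conclusion is unchanged.
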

\begin{proof}
	The proof is similar to that of \cite[Lemma 4.1]{KM10}. Now we use Lemma \ref{lem:info.geo.coefficients}, keeping in mind that in the primal case (say) $\eta = \eta(\xi)$ is a function of $\xi$. For example, we have
	\begin{equation*}
		\begin{split}
	\frac{\partial}{\partial \xi^i} {\Gamma_{jk}}^{\ell} &=
	 \left(c_{ijk:\bar{m}} + c_{jk: \bar{m}\bar{\alpha}} \frac{\partial \eta^{\bar{\alpha}}}{\partial \xi^i}\right) c^{\bar{m}:\ell} \\
	 &\quad - c_{jk:\bar{m}} \left( c^{\bar{m}:\alpha} c_{\alpha i : \bar{\beta}} c^{\bar{\beta}:\ell} + c^{\bar{m}: \alpha}c_{\alpha: \bar{\alpha} \bar{\beta}} c^{\bar{\beta}:\ell} \frac{\partial \eta^{\bar{\alpha}}}{\partial \xi^i} \right).
	\end{split}
\end{equation*}
	The dual case is similar.
\end{proof}

\begin{definition} [Unnormalized sectional curvature] \label{def:unnor.sec.cur}
	Let $X, Y$ be tangent vectors at the same point of $M \times M'$. We define the unnormalized sectional curvature of $\bar{R}$ by
	\begin{equation} \label{eqn:unnormalized.curvature}
		\overline{\sec}_u(X, Y) = h(\bar{R}(X, Y)Y, X).
	\end{equation}
	Similarly, we define
	\begin{equation} \label{eqn:unnormalized.curvature2}
		\sec_u(X, Y) = g(R(X, Y)Y, X), \quad \sec_u^*(X, Y) = g(R^*(X, Y)Y, X),
	\end{equation}
	when $X$ and $Y$ are tangent to $G$.
\end{definition}

\begin{remark} [The Ma-Trudinger-Wang tensor] \label{rmk:MTW}
	At a point $x = (p, q') \in M \times M'$, let $X = u \oplus 0$ and $Y = 0 \oplus \bar{v}$ where $u \in T_p M$ and $\bar{v} \in T_{q'} M$. Following Kim and McCann \cite{KM10}, the Ma-Trudinger-Wang (MTW) tensor (see \cite[Section 4]{GA14}) can be expressed intrinsically as the unnormalized (cross) sectional curvature
	\begin{equation} \label{eqn:MTW.tensor}
		\mathfrak{S} = \mathfrak{S}(u, \bar{v}) =  2 \overline{\sec}_u (u \oplus 0, 0 \oplus \bar{v}).
	\end{equation}
	(The constant $2$ comes from the $1/2$ in \eqref{eqn:R.bar.components}.) The cost $c$ is said to be a weakly regular cost if $\mathfrak{S} \geq 0$ whenever $u \oplus \bar{v}$ is a null tangent vector, i.e., $h(u \oplus \bar{v}, u \oplus \bar{v}) = 0$ \cite[Definition 2.3]{KM10}. Note that in this case we have $h(X, X) h(Y, Y) - h(X, Y)^2 = 0$; this is why one considers the unnormalized sectional curvature instead of the usual one. We refer the reader to \cite{V08, KM10, M12} and their references for how this condition comes into play in the regularity theory of optimal transport maps. In Corollary \ref{cor:MTW.interpret} we give a new information-geometric interpretation of this quantity.
\end{remark}

Now we are ready to state an interesting relation among the unnormalized sectional curvatures.

\begin{theorem} \label{thm:average.sec}
Let $X, Y \in T_{(p, f(p))}G \subset T_{(p, f(p))} M \times M'$. Then
\begin{equation} \label{eqn:curvatures.identity0}
\overline{\sec}_u(X, Y) = \frac{1}{2} \left( \sec_u(X, Y) + \sec_u^*(X, Y) \right).
\end{equation}
In particular, suppose the dualistic structure $(g, \nabla, \nabla^*)$ on $G$ has constant information-geometric sectional curvature $\lambda \in \mathbb{R}$. By definition, this means that
\begin{equation} \label{eqn:info.const.curvature}
\sec_u(X, Y) = \sec_u^*(X, Y) = \lambda (g(X, X) g(Y, Y) - g(X, Y)^2)
\end{equation}
for $X, Y$ tangent to $G$. Then 
\begin{equation} \label{eqn:constant.curvature}
\overline{\sec}_u(X, Y) = \lambda (h(X, X) h(Y, Y) - h(X, Y)^2) = \lambda (g(X, X) g(Y, Y) - g(X, Y)^2)
\end{equation}
	for $X, Y$ tangent to $G$.
\end{theorem}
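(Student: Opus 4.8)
The plan is to reduce the identity \eqref{eqn:curvatures.identity0} to a pointwise computation in the coordinates $(\xi,\eta')$ and then match the two sides term by term. Fix a point of $G$ and, following \eqref{eqn:tangent.vector.submanifold}, record a tangent vector $X\in T_{(p,f(p))}G$ by its horizontal components $X^i$ together with its vertical components $X^{\bar i}:=\frac{\partial\eta^{\bar i}}{\partial\xi^m}X^m$ (and likewise for $Y$); these are exactly the coordinate components of $X$ in the primal and dual coordinates on $G$. In this notation the three unnormalized sectional curvatures read $\sec_u(X,Y)=R_{ijk\ell}X^iY^jY^kX^\ell$, $\sec_u^*(X,Y)=R^*_{\bar i\bar j\bar k\bar\ell}X^{\bar i}Y^{\bar j}Y^{\bar k}X^{\bar\ell}$, and $\overline{\sec}_u(X,Y)=\bar R_{ABCD}X^AY^BY^CX^D$, where in the last expression the capital indices run over both barred and unbarred labels.

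First I would rewrite $\sec_u$ and $\sec_u^*$ in terms of the single tensor $\bar R_{i\bar j\bar k\ell}$ using Lemma \ref{lem:curvature.coefficients}. Substituting \eqref{eqn:primal.curvature} and contracting, each Jacobian factor absorbs a horizontal component into a vertical one, e.g.\ $Y^j\frac{\partial\eta^{\bar\alpha}}{\partial\xi^j}=Y^{\bar\alpha}$; dually, substituting \eqref{eqn:dual.curvature} and using that $\frac{\partial\xi}{\partial\eta}$ inverts $\frac{\partial\eta}{\partial\xi}$, one finds $Y^{\bar j}\frac{\partial\xi^\alpha}{\partial\eta^{\bar j}}=Y^\alpha$, so an analogous absorption turns dual components back into horizontal ones. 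This presents both $\sec_u$ and $\sec_u^*$ as sums of contractions of $\bar R_{i\bar j\bar k\ell}$ against mixtures of the horizontal and vertical components of $X$ and $Y$. Next I would expand $\overline{\sec}_u$ directly: by Lemma \ref{lem:R.bar.coeff} only the components with two barred and two unbarred indices survive and $\bar R_{ij\bar k\bar\ell}=0$, so exactly four index patterns contribute, and the symmetries \eqref{eqn:R.symmetries} reduce each to $\pm\bar R_{i\bar j\bar k\ell}$.

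The conclusion follows by comparison. Carrying out both reductions, $2\,\overline{\sec}_u(X,Y)$ and $\sec_u(X,Y)+\sec_u^*(X,Y)$ turn out to be the very same contraction of $\bar R_{i\bar j\bar k\ell}$: after relabelling, both equal the sum of the four monomials $X^aY^{\bar b}Y^{\bar c}X^d$, $-X^aY^{\bar b}X^{\bar c}Y^d$, $Y^aX^{\bar b}X^{\bar c}Y^d$ and $-Y^aX^{\bar b}Y^{\bar c}X^d$ weighted by $\bar R_{a\bar b\bar c d}$, which gives \eqref{eqn:curvatures.identity0}. I expect the main obstacle to be purely organizational: tracking the barred/unbarred bookkeeping and the placement of the Jacobians $\frac{\partial\eta}{\partial\xi}$ and $\frac{\partial\xi}{\partial\eta}$ so that the contributions of $\sec_u$ and $\sec_u^*$ line up exactly with those of $\overline{\sec}_u$. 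It is worth noting that the matching is term by term and does not even require the extra symmetry of $\bar R_{i\bar j\bar k\ell}$ in $i\leftrightarrow\ell$ and $\bar j\leftrightarrow\bar k$ visible in \eqref{eqn:R.bar.components}; the only substantive inputs are Lemma \ref{lem:curvature.coefficients} and the antisymmetries \eqref{eqn:R.symmetries}.

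Finally, the constant-curvature statement is immediate. Assuming \eqref{eqn:info.const.curvature}, the identity \eqref{eqn:curvatures.identity0} gives $\overline{\sec}_u(X,Y)=\frac12\left(\sec_u(X,Y)+\sec_u^*(X,Y)\right)=\lambda\,(g(X,X)g(Y,Y)-g(X,Y)^2)$; and since $X,Y$ are tangent to $G$, Theorem \ref{thm:metric} (that $h$ restricts to $g$ on $G$) lets me replace $g$ by $h$ throughout, yielding \eqref{eqn:constant.curvature}.
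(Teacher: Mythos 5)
Your proposal is correct and follows essentially the same route as the paper's own proof: expand $\overline{\sec}_u$ into the four mixed-index terms via Lemma \ref{lem:R.bar.coeff} and the symmetries \eqref{eqn:R.symmetries}, rewrite $\sec_u$ and $\sec_u^*$ through Lemma \ref{lem:curvature.coefficients} by absorbing the Jacobians into the vertical/horizontal components, match term by term, and then obtain the constant-curvature statement from \eqref{eqn:curvatures.identity0} together with Theorem \ref{thm:metric}. The only quibble is a harmless bookkeeping slip: each of the four monomials appears once in $\overline{\sec}_u$ and twice in $\sec_u+\sec_u^*$ (so the common value of $2\,\overline{\sec}_u$ and $\sec_u+\sec_u^*$ is twice the sum you wrote), which does not affect the identity.
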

\begin{proof}
	Write $X = x^i \partial_i + x^{\bar{i}} \partial_{\bar{i}}$ and $Y = y^i \partial_i + y^{\bar{i}} \partial_{\bar{i}}$. Using \eqref{eqn:unnormalized.curvature}, Lemma \ref{lem:R.bar.coeff} and the symmetries \eqref{eqn:R.symmetries} of $\bar{R}$, we have
	\begin{equation} \label{eqn:sec.bar.computation}
		\begin{split}
			\overline{\sec}_u(X, Y) &= \bar{R}_{i\bar{j}\bar{k}\ell} x^i y^{\bar{j}}y^{\bar{k}} x^{\ell} + \bar{R}_{k\bar{\ell}\bar{i}j} x^{\bar{i}} y^{j}y^{k} x^{\bar{\ell}} \\
			&\quad - \bar{R}_{j\bar{i}\bar{k}\ell} x^{\bar{i}} y^{j}y^{\bar{k}} x^{\ell} - \bar{R}_{i\bar{j}\bar{\ell}k} x^{i} y^{\bar{j}}y^{k} x^{\bar{\ell}}.
		\end{split}
	\end{equation}
	
	Since $X$ and $Y$ are tangent to $G$, from \eqref{eqn:tangent.vector.submanifold} we have $x^{\bar{i}} = x^i \frac{\partial \eta^{\bar{i}}}{\partial \xi^i}$ and $x^i = x^{\bar{i}} \frac{\partial \xi^i}{\partial \eta^{\bar{i}}}$ (similar for $Y$). In primal coordinate on $G$, we have $X = x^i \partial_i$ and $Y = y_i \partial_i$. We then compute
	\begin{equation} \label{eqn:sec.identity1}
		\begin{split}
			\sec_u(X, Y) &= R_{ijk\ell}x^i y^j y^k x^{\ell}\\
			&= 2\bar{R}_{j\bar{\alpha}\bar{\beta}k}  \frac{\partial \eta^{\bar{\alpha}}}{\partial \xi^i}\frac{\partial \eta^{\bar{\beta}}}{\partial \xi^{\ell}} x^i y^j y^k x^{\ell} - 2\bar{R}_{i\bar{\alpha}\bar{\beta}k}  \frac{\partial \eta^{\bar{\alpha}}}{\partial \xi^j}\frac{\partial \eta^{\bar{\beta}}}{\partial \xi^{\ell}} x^i y^j y^k x^{\ell} \\
			&= 2 \bar{R}_{j \bar{\alpha} \bar{\beta} k} x^{\bar{\alpha}} y^j y^k x^{\bar{\beta}} - 2 \bar{R}_{i \bar{\alpha} \bar{\beta} k} x^i y^{\bar{\alpha}} y^{k} x^{\bar{\beta}},
		\end{split}
	\end{equation}
	where the last identity follows from Lemma \ref{lem:curvature.coefficients}. Similarly, working in dual coordinates, we get
	\begin{equation} \label{eqn:sec.identity2}
		\sec_u^*(X, Y) = 2 \bar{R}_{\alpha \bar{j} \bar{k} \beta} x^{\alpha} y^{\bar{j}} y^{\bar{k}} x^{\beta} - 2 \bar{R}_{\alpha \bar{i}\bar{k} \beta} x^{\bar{i}} y^{\alpha} y^{\bar{k}} x^{\beta}.
	\end{equation}
	The result follows by averaging.
\end{proof}


\subsection{Divergence between geodesics}
Consider a Riemannian manifold with distance $d$. If $\gamma(s)$ and $\sigma(t)$ are two arc-length parameterized geodesics started at the same point when $s = t = 0$, then
\begin{equation} \label{eqn:sectional.curvature.classical.interpretation}
	\left. \frac{\partial^4}{\partial s^2 \partial t^2} d^2(\gamma(s), \sigma(t)) \right|_{s = t = 0} = - \frac{4}{3} \kappa \sin^2 \theta,
\end{equation}
where $\kappa$ is the sectional curvature of the the plane spanned by $\dot{\gamma}(0)$ and $\dot{\sigma}(0)$, and $\theta$ is the angle between the initial velocities (see for example \cite[(4.9)]{KM10}). This is the classical geometric interpretation of sectional curvature. In this section we extend this result to a $c$-divergence. Naturally, this involves the primal and dual geodesics rather than the Riemannian geodesics. The special case for $L^{(\alpha)}$-divergence is given in \cite{WY19}. This result (and its proof) is closely related to, but different from, \cite[Lemma 4.5]{KM10} which extends \eqref{eqn:sectional.curvature.classical.interpretation} to the pseudo-Riemanian framework with a general cost function. 

\begin{theorem} \label{thm:divergence.derivative.s2t2}
	Consider the graph $G$ and equip it with the dualistic structure $(g, \nabla, \nabla^*)$ induced by a $c$-divergence ${\bf D}$. Let $\gamma(s) = (p(s), f(p(s)))$ be a primal geodesic and $\sigma(t) = (f^{-1}(q'(t)), q'(t))$ be a dual geodesic with $\gamma(0) = \sigma(0) = x$. Letting
	\[
	X = \dot{p}(0) \oplus 0, \ Y = 0 \oplus \dot{q}'(0) \in T_{x} (M \times M'),
	\]
	we have
	\begin{equation} \label{eqn:c.divergence.forth.derivative}
		\begin{split}
			\left.  \frac{\partial^4}{\partial s^2 \partial t^2} {\bf D}[\gamma(s) : \sigma(t)] \right|_{s = t = 0} = \left. \frac{\partial^4}{\partial s^2 \partial t^2} c(p(s), q'(t)) \right|_{s = t = 0}= -2 \overline{\sec}_u (X, Y).
		\end{split}
	\end{equation}
\end{theorem}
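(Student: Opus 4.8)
The first of the two equalities is essentially free. Writing $\gamma(s) = (p(s), f(p(s)))$ and $\sigma(t) = (f^{-1}(q'(t)), q'(t))$, the definition \eqref{eqn:c.divergence} of the $c$-divergence gives
\[
{\bf D}[\gamma(s) : \sigma(t)] = c(p(s), q'(t)) - \varphi(p(s)) - \psi(q'(t)).
\]
The term $\varphi(p(s))$ depends on $s$ alone and is annihilated by $\partial_t^2$, while $\psi(q'(t))$ depends on $t$ alone and is annihilated by $\partial_s^2$. Hence $\partial_s^2 \partial_t^2$ leaves only the contribution of $c(p(s),q'(t))$, which is the first equality.

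For the second equality I would work in the local coordinates $(\xi, \eta')$ of Section \ref{sec:coeff.prelim}, writing $p(s)$ as $\xi(s)$ and $q'(t)$ as $\eta'(t)$, and expand $\partial_s^2\partial_t^2 c(\xi(s),\eta'(t))$ by the chain rule. Differentiating twice in $s$ and twice in $t$ produces exactly four terms, which at $s=t=0$ read
\[
c_{ij:\bar k\bar\ell}\,a^i a^j b^{\bar k} b^{\bar\ell}, \quad c_{ij:\bar k}\,a^i a^j\,\ddot\eta'^{\bar k}, \quad c_{i:\bar k\bar\ell}\,b^{\bar k} b^{\bar\ell}\,\ddot\xi^i, \quad c_{i:\bar k}\,\ddot\xi^i\,\ddot\eta'^{\bar k},
\]
where $a^i = \dot\xi^i(0)$ and $b^{\bar k} = \dot\eta'^{\bar k}(0)$, so that $X = a^i\partial_i$ and $Y = b^{\bar k}\partial_{\bar k}$ in the decomposition \eqref{eqn:tangent.space.decomp}.

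The key inputs are the two geodesic equations. Since $\gamma$ is a primal geodesic, the $\nabla$-geodesic equation in the primal coordinate gives $\ddot\xi^i(0) = -\Gamma_{pq}\mathstrut^{i} a^p a^q$; since $\sigma$ is a dual geodesic whose dual coordinate is exactly the $M'$-component $\eta'(t)$ (cf.\ Section \ref{sec:coeff.prelim}), the $\nabla^*$-geodesic equation gives $\ddot\eta'^{\bar k}(0) = -\Gamma^*_{\bar r\bar s}\mathstrut^{\bar k} b^{\bar r} b^{\bar s}$. Substituting the Christoffel symbols of Lemma \ref{lem:info.geo.coefficients}, namely $\Gamma_{ij}\mathstrut^{k} = c_{ij:\bar m}c^{\bar m:k}$ and $\Gamma^*_{\bar i\bar j}\mathstrut^{\bar k} = c^{\bar k:m}c_{m:\bar i\bar j}$, into the four terms and repeatedly applying the inverse relations \eqref{eqn:c.matrix.inverse} collapses the bookkeeping: the second and third terms become equal, and the fourth becomes their negative. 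Two of the three copies therefore cancel, leaving
\[
\left(c_{ij:\bar k\bar\ell} - c_{ij:\bar\beta}\,c^{\bar\beta:\alpha}\,c_{\alpha:\bar k\bar\ell}\right) a^i a^j b^{\bar k} b^{\bar\ell}.
\]

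Finally, the scalar in parentheses is precisely $-2\bar R_{i\bar k\bar\ell j}$ by the formula \eqref{eqn:R.bar.components} of Lemma \ref{lem:R.bar.coeff}. Since $X$ has only unbarred and $Y$ only barred components, Definition \ref{def:unnor.sec.cur} gives $\overline{\sec}_u(X,Y) = h(\bar R(X,Y)Y,X) = \bar R_{i\bar k\bar\ell j}\,a^i b^{\bar k}b^{\bar\ell}a^j$ (after relabeling), so the surviving expression equals $-2\overline{\sec}_u(X,Y)$, as claimed. The only delicate point is the index bookkeeping in the chain-rule expansion and the contractions against $(c^{\bar m:k})$; once the two geodesic substitutions are made and the inverse relations \eqref{eqn:c.matrix.inverse} are used, the cancellation of the three ``Christoffel'' terms down to a single one is the crux, and it is exactly what converts the raw fourth derivative into the intrinsic curvature coefficient $\bar R_{i\bar k\bar\ell j}$.
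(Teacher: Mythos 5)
Your proposal is correct and follows essentially the same route as the paper's proof: expand $\partial_s^2\partial_t^2\, c(\xi(s),\eta'(t))$ into the four chain-rule terms, substitute the primal and dual geodesic equations $\ddot\xi^k = -c_{ij:\bar m}c^{\bar m:k}\dot\xi^i\dot\xi^j$ and $\ddot\eta'^{\bar k} = -c^{\bar k:m}c_{m:\bar i\bar j}\dot\eta'^{\bar i}\dot\eta'^{\bar j}$, and identify the surviving expression $\bigl(c_{ij:\bar k\bar\ell} - c_{ij:\bar\beta}c^{\bar\beta:\alpha}c_{\alpha:\bar k\bar\ell}\bigr)a^ia^jb^{\bar k}b^{\bar\ell}$ with $-2\bar R_{i\bar k\bar\ell j}a^i b^{\bar k}b^{\bar\ell}a^j = -2\overline{\sec}_u(X,Y)$ via Lemma \ref{lem:R.bar.coeff}. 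Your explicit accounting of the cancellation (terms two and three equal, term four their negative, via \eqref{eqn:c.matrix.inverse}) is exactly the "plugging in and simplifying" step the paper leaves implicit.
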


Theorem \ref{thm:divergence.derivative.s2t2} may be regarded as an interpretation of the MTW tensor \eqref{eqn:MTW.tensor} on the graph $G$. It should be compared with the standard interpretation (see e.g.~\cite[(4.13)]{GA14}) which involves the $c$-exponential map.

\begin{corollary} [Information-geometric interpretation of the MTW tensor] \label{cor:MTW.interpret}
	In the context of Theorem \ref{thm:divergence.derivative.s2t2}, let $(p, q) \in G$, $u \in T_p M$ and, $\bar{v} \in T_q M$. Let $\gamma$ and $\sigma$ be respectively primal and dual geodesics whose initial velocities match with $u$ and $\bar{v}$ (when expressed in the respective coordinates). Then the MTW tensor can be expressed as
	\begin{equation} \label{eqn:MTW.interpret}
		\mathfrak{S}(u, \bar{v}) = -\left.  \frac{\partial^4}{\partial s^2 \partial t^2} {\bf D}[\gamma(s) : \sigma(t)] \right|_{s = t = 0}.
	\end{equation}
\end{corollary}

Although the statement of Theorem \ref{thm:divergence.derivative.s2t2} (as well as the proof) is very similar to \cite[Lemma 4.5]{KM10}, the two results are not the same. In \eqref{eqn:c.divergence.forth.derivative}, both $\gamma$ and $\sigma$ are curves in $G$. On the other hand, in \cite[Lemma 4.5]{KM10} one is a ``horizontal'' curve and the other one is ``vertical''. Before giving the proof of Theorem \ref{thm:divergence.derivative.s2t2} let us give some examples. Another application to the $L^{(\alpha)}$-divergence is given in Corollary \ref{cor:L.alpha.curvature.interpretation}.

\begin{example} [Bregman divergence]
	Consider Bregman divergence ${\bf D}$ as in \eqref{eg:Bregman.divergence}. Let $\xi$ and $\eta'$ be respectively the primal and dual coordinates. By Example \ref{eg:Bregman.geometry}, the primal and dual geodesics are given respectively by $\xi(s) = \xi(0) + \dot{\xi}(0)s$ and $\eta'(t) = \eta'(0) + \dot{\eta}'(0) t$. This gives
	\[
	\left.  \frac{\partial^4}{\partial s^2 \partial t^2} {\bf D}[\gamma(s) : \sigma(t)] \right|_{s = t = 0} = \left.  \frac{\partial^4}{\partial s^2 \partial t^2} \frac{1}{2}|\xi(s) - \eta'(t)|^2 \right|_{s = t = 0} =- \ddot{\xi}(0) \ddot{\eta}(0) = 0,
	\]
	which is consistent with the dual flatness. More about the constant curvature case is studied in Section \ref{sec:constant.curvature}.
\end{example}

\begin{example}[Quadratic cost on a Riemannian manifold] \label{ex:Loeper}
	Let $M$ be a Riemannian manifold with geodesic distance $d(p, q')$. Consider the cost function $c(p, q') = \frac{1}{2} d^2(p, q')$ on $M \times M$. Let $f = \mathrm{Id}$ be the identity transport, so that $G$ is the diagonal of $M \times M$. The corresponding $c$-divergence is ${\bf D} [ (p, p) : (p', p') ] = \frac{1}{2} d^2(p, p')$ (see the discussion before Example \ref{eg:Bregman.divergence}). Identifying $M$ and $G$ (as smooth manifolds) under the natural map $p \mapsto (p, p)$, it is easily shown that the dualistic structure $(g, \nabla, \nabla^*)$ reduces to the Riemannian structure of $M$, i.e., $g$ is the Riemannian metric of $M$ and $\nabla = \nabla^*$ are equal to the Riemannian Levi-Civita connection. In particular, the primal and dual geodesics are simply Riemannian geodesics. By  \eqref{eqn:sectional.curvature.classical.interpretation} and \eqref{eqn:MTW.interpret} we immediately get $\mathfrak{S} = \frac{2}{3} \kappa \sin^2 \theta$,
	where $\kappa$ is the Riemannian sectional curvature of the plane spanned by $(u, v)$ and $\cos \theta = g(u, v)$. This recovers \cite[Theorem 3.8]{L09}. Also see \cite[Example 3.6]{KM10}.
\end{example}

\begin{proof}[Proof of Theorem \ref{thm:divergence.derivative.s2t2}]
	As usual we use the primal coordinates for $\gamma$ and the dual coordinates for $\sigma$. The first equality follows directly from \eqref{eqn:c.divergence}. Computing the derivative \eqref{eqn:c.divergence.forth.derivative}, we have
	\begin{equation} \label{eqn:D.derivative.first.step}
		\frac{\partial^4}{\partial s^2 \partial t^2} {\bf D}[\gamma : \sigma]  = c_{i:\bar{k}} \ddot{\xi}^i \ddot{\eta}'^{\bar{k}} + c_{i:\bar{k}\bar{\ell}} \ddot{\xi}^i \dot{\eta}'^{\bar{k}} \dot{\eta}'^{\bar{\ell}} + c_{ij:\bar{k}} \dot{\xi}^i \dot{\xi}^j \ddot{\eta}'^{\bar{k}} + c_{ij:\bar{k}\bar{\ell}} \dot{\xi}^i \dot{\xi}^j \dot{\eta}'^{\bar{k}} \dot{\eta}'^{\bar{\ell}}.
	\end{equation}	
	By Lemma \ref{lem:info.geo.coefficients}, the primal and geodesic equations are given by
	\[
	\ddot{\xi}^k + c_{ij:\bar{m}}c^{\bar{m}:k} \dot{\xi}^i \dot{\xi}^j = 0, \quad
	\ddot{\eta}'^{\bar{k}} + c^{\bar{k}:m} c_{m:\bar{i}\bar{j}} \dot{\eta}'^{\bar{i}} \dot{\eta}'^{\bar{j}} = 0.
	\]
	Plugging into \eqref{eqn:D.derivative.first.step} and simplifying, we have
	\[
	\left.\frac{\partial^4}{\partial s^2 \partial t^2} {\bf D}[\gamma : \sigma] \right|_{(0, 0)} = (c_{ij:\bar{k}\bar{\ell}} - c_{ij:\bar{m}} c^{\bar{m}:m} c_{m:\bar{k}\bar{\ell}}) \dot{\xi}^i \dot{\xi}^j \dot{\eta}'^{\bar{k}} \dot{\eta}'^{\bar{\ell}} = -2R_{i\bar{k}\bar{\ell}j}  \dot{\xi}^i  \dot{\eta}'^{\bar{k}} \dot{\eta}'^{\bar{\ell}} \dot{\xi}^j.
	\]
	Comparing this with \eqref{eqn:unnormalized.curvature} gives the result.
\end{proof}


\section{Costs with constant sectional curvature}  \label{sec:constant.curvature}
The quadratic cost and Bregman divergence are flat when considered in both the pseudo-Riemannian and information geometric frameworks (see Example \ref{eg:quadratic.cost} and Example \ref{eg:Bregman.geometry}). In this section we consider the case of constant (non-zero) sectional curvature, a concept we now make precise. Note that the unnormalized sectional curvature $\overline{\sec}_u$ can be regarded as an operator on $\bigwedge^2 (T(M \times M')) = (\bigwedge^2 TM) \oplus (\bigwedge^2 TM') \oplus (TM \wedge TM')$ (see \cite[Remark 4.2]{KM10}). Since $\overline{\sec}_u$ vanishes on $(\bigwedge^2 TM) \oplus (\bigwedge^2 TM')$, $\overline{\sec}_u$ is determined by its action on $TM \wedge TM'$. 

\begin{definition} \label{def:const.curvature} 
	Consider a real-valued cost function $c$ on $M \times M'$.
	\begin{enumerate}
		\item[(i)] $c$ has constant cross curvature $\lambda \in \mathbb{R}$ on $TM \wedge TM'$ if
		\begin{equation} \label{eqn:const.nor.sect.cur}
			\begin{split}
				\overline{\sec}_u(X,Y) = \lambda \left(h(X,X)h(Y,Y) - h(X,Y)^2\right).
			\end{split}
		\end{equation}
		for any $X = v \oplus 0, Y = 0 \oplus \bar{v} \in T_{(p, q')} (M \times M')$.
		\item[(ii)] $c$ has constant sectional curvature $\lambda \in \mathbb{R}$ on a graph $G$ of optimal transport if \eqref{eqn:const.nor.sect.cur} holds when $X, Y$ are tangent to $G$. By Theorem \ref{thm:average.sec}, this is the case when $G$ has constant information geometric sectional curvature (see \eqref{eqn:info.const.curvature}).
		
	\end{enumerate}
\end{definition}

Note that when $X = v \oplus 0$ and $Y = 0 \oplus \bar{v}$, from the form of the metric $h$ (see \eqref{eqn:metric.h}) we always have $h(X, X) = h(Y, Y) = 0$. Thus in \eqref{eqn:const.nor.sect.cur} we have $\overline{\sec}_u(X, Y) = -\lambda h(X, Y)^2$.

\subsection{The logarithmic cost function}
Our main examples for Definition \ref{def:const.curvature} are the quadratic cost (Example \ref{eg:Bregman.divergence}) as well as  the logarithmic cost (Example \ref{ex:L.alpha}). As shown in \cite{PW16, W18}, the logarithmic cost arises naturally in stochastic portfolio theory \cite{PW16}. See \cite{PW18b} for probabilistic interpretations involving Dirichlet perturbations.

\begin{lemma} \label{lemma:log.geometry}
	Consider the logarithmic cost $c(p,q') = \frac{1}{\alpha} \log(1+\alpha p \cdot q')$ where $M = M' = (0, \infty)^n$ and $\alpha > 0$. Using the Euclidean coordinates (i.e., $\xi = p$ and $\eta' = q'$), we have the following coefficients for the induced pseudo-Riemannian geometry:
	\begin{equation} \label{eqn:log.metric.expr}
		c_{i:\bar{j}} = \frac{\delta_{i}^j}{1 + \alpha p \cdot q'} - \frac{\alpha p^{j} q'^{\bar{i}}}{(1 + \alpha p \cdot q')^2},
	\end{equation}
	\begin{equation} \label{eqn:log.connection.primal.expr}
		\bar{\Gamma}_{ij}\mathstrut^{k}  = c^{\bar{m}:k} c_{ij:\bar{m}} = -\frac{\alpha}{1 + \alpha p \cdot q'}\left( q'^{\bar{i}} \delta_{j}^k + q'^{\bar{j}} \delta_{i}^k\right),
	\end{equation}
	\begin{equation} \label{eqn:log.connection.dual.expr}
		\bar{\Gamma}_{\bar{i}\bar{j}}\mathstrut^{\bar{k}}  = c^{\bar{k}:m} c_{m:\bar{i}\bar{j}} = -\frac{\alpha}{1 + \alpha p \cdot q'}\left( p^i \delta_{j}^k + p^j \delta_{i}^k\right),
	\end{equation}
	\begin{equation} \label{eqn:log.cur.expr}
		\bar{R}_{i\bar{j}\bar{k}l} = \frac{\alpha}{2}  \left( c_{i:\bar{j}} c_{l:\bar{k}} + c_{i:\bar{k}}c_{l:\bar{j}}\right).
	\end{equation}
\end{lemma}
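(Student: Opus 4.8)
The statement is a direct but lengthy computation, and the plan is to differentiate the cost in the order prescribed by Lemmas \ref{lem:Levi.Civita.connection} and \ref{lem:R.bar.coeff}, simplifying at each stage. Throughout I would abbreviate $u = 1 + \alpha p \cdot q'$, so that $c = \frac{1}{\alpha} \log u$ with $\partial u / \partial \xi^i = \alpha q'^{\bar i}$ and $\partial u / \partial \eta'^{\bar j} = \alpha p^j$. The computation rests on two structural features: the single algebraic identity $u - \alpha (p \cdot q') = 1$, which forces repeated cancellations, and the invariance of $c$ under the swap $(p, q') \mapsto (q', p)$, which exchanges barred and unbarred indices and lets me read off every ``dual'' quantity from its ``primal'' counterpart for free.

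First I would record the low-order derivatives. One finds $c_{i:} = q'^{\bar i}/u$, and one more $\eta'$-derivative gives \eqref{eqn:log.metric.expr}. Writing this as $(c_{i:\bar j}) = \frac{1}{u}\bigl(I - \frac{\alpha}{u}\, q' p^{\top}\bigr)$, a rank-one perturbation of a scalar matrix, I would invert by Sherman--Morrison; here the identity $1 - \frac{\alpha}{u}(p \cdot q') = \frac{1}{u}$ collapses the denominator and yields the clean inverse $c^{\bar m : k} = u(\delta_m^k + \alpha p^k q'^{\bar m})$, which one may alternatively just confirm by checking $c_{i:\bar m}\, c^{\bar m : j} = \delta_i^j$. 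Differentiating \eqref{eqn:log.metric.expr} once more in $\xi$ produces the third derivative $c_{ij:\bar m}$, a sum of terms of shape $\delta\, q'/u^2$ together with a term $p\, q' q'/u^3$. Contracting to form $\bar{\Gamma}_{ij}\mathstrut^{k} = c^{\bar m : k} c_{ij:\bar m}$ generates several rank-one and rank-two pieces, but all terms carrying the factor $p^k q'^{\bar i} q'^{\bar j}$ cancel (again through $1 + \alpha(p \cdot q') = u$), leaving exactly \eqref{eqn:log.connection.primal.expr}. Equation \eqref{eqn:log.connection.dual.expr} then follows immediately from the $(p, q')$-symmetry.

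The main obstacle is the curvature formula \eqref{eqn:log.cur.expr}, which I would derive from \eqref{eqn:R.bar.components}, namely $\bar{R}_{i\bar j\bar k\ell} = \frac{1}{2}\bigl(-c_{i\ell:\bar j\bar k} + c_{i\ell:\bar\beta}\, c^{\bar\beta:\alpha}\, c_{\alpha:\bar j\bar k}\bigr)$. This needs the fourth derivative $c_{i\ell:\bar j\bar k}$, which spreads across orders $u^{-2}, u^{-3}, u^{-4}$ and is the most error-prone piece of bookkeeping. To tame the second (``correction'') term I would avoid re-expanding $c^{\bar\beta:\alpha}$ and instead note that $c_{i\ell:\bar\beta}\, c^{\bar\beta:\alpha} = \bar{\Gamma}_{i\ell}\mathstrut^{\alpha}$ is already in closed form from \eqref{eqn:log.connection.primal.expr}, while $c_{\alpha:\bar j\bar k}$ is the mirror image of $c_{ij:\bar m}$ under the $(p, q')$-swap; contracting these two compact expressions is far shorter than a brute-force expansion.

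Assembling $-c_{i\ell:\bar j\bar k} + \bar{\Gamma}_{i\ell}\mathstrut^{\alpha}\, c_{\alpha:\bar j\bar k}$ and collecting by powers of $u$ leaves a $\delta\delta$ term over $u^2$, four $\delta\, p\, q'$ terms over $u^3$, and a $p\, p\, q' q'$ term over $u^4$. Finally I would expand the claimed right-hand side $\alpha(c_{i:\bar j}c_{\ell:\bar k} + c_{i:\bar k}c_{\ell:\bar j})$ using \eqref{eqn:log.metric.expr} and match the two expressions group by group; all three groups coincide and the overall constant works out to $\alpha/2$, establishing \eqref{eqn:log.cur.expr}. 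Since the only genuine risk is a sign or index slip in the fourth-order term, I would use as a consistency check the fact that both \eqref{eqn:R.bar.components} and the claimed form are manifestly symmetric under $i \leftrightarrow \ell$ and under $\bar j \leftrightarrow \bar k$, in agreement with the curvature symmetries \eqref{eqn:R.symmetries}.
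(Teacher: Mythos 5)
Your proposal is correct and follows essentially the same route as the paper: direct computation of the derivatives of $c$, inversion of $(c_{i:\bar{j}})$, and then evaluation of \eqref{eqn:R.bar.components} with the terms collected by powers of $1+\alpha p\cdot q'$ and matched against the expansion of $\alpha\left(c_{i:\bar{j}}c_{\ell:\bar{k}}+c_{i:\bar{k}}c_{\ell:\bar{j}}\right)$, which is exactly how the paper concludes. Your organizational shortcuts (Sherman--Morrison for the inverse, the $(p,q')$-swap symmetry for the dual Christoffel symbols, and reusing $\bar{\Gamma}_{i\ell}\mathstrut^{\alpha}$ in the correction term) are sound refinements of the same computation rather than a different method.
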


\begin{proof}
	The expressions \eqref{eqn:log.metric.expr}--\eqref{eqn:log.connection.dual.expr} can be obtained by direct computations. By Lemma \ref{lem:R.bar.coeff}, we have
	\begin{equation*}
		\begin{split}
			2\bar{R}_{i\bar{j}\bar{k}l} =& \frac{\alpha}{(1+\alpha p \cdot q')^2} \left(\delta_{i}^k\delta_{j}^l+\delta_{i}^j\delta_{k}^l\right) \\
			&- \frac{\alpha^2}{(1+\alpha p \cdot q')^3} \left(p^j q'^{\bar{l}}\delta_{i}^k + p^j q'^{\bar{i}}\delta_{k}^l + p^k q'^{\bar{l}}\delta_{i}^j + p^k q'^{\bar{i}}\delta_{j}^l\right) \\
			&+ \frac{2\alpha^3}{(1+\alpha p \cdot q')^4} \left(p^{\bar{i}} q'^j p^k q'^{\bar{l}}\right).
		\end{split}
	\end{equation*}
	We obtain \eqref{eqn:log.cur.expr} by comparing with \eqref{eqn:log.metric.expr}.
\end{proof}

In fact, as the following lemma shows, \eqref{eqn:log.cur.expr} is equivalent to the condition of constant cross curvature.

\begin{lemma} \label{lemma:const.sec.cur}
	A cost function $c$ has constant cross curvature $-4\alpha$ on $TM \wedge TM'$ if and only if \eqref{eqn:log.cur.expr} holds in some (and hence any) coordinate system $(\xi, \eta')$. In particular, the logarithmic cost function \eqref{eqn:alpha.cost} has constant cross curvature $-4\alpha$ on $TM \wedge TM'$.
\end{lemma}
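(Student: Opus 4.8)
The plan is to prove the equivalence by a direct tensorial computation, reducing the constant cross curvature condition to the algebraic form of $\bar{R}_{i\bar{j}\bar{k}\ell}$.

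First I would unwind the definition. By Definition \ref{def:const.curvature}(i), the cost $c$ has constant cross curvature $\lambda = -4\alpha$ on $TM \wedge TM'$ means that for all $X = v \oplus 0$ and $Y = 0 \oplus \bar{v}$ we have $\overline{\sec}_u(X, Y) = \lambda(h(X,X)h(Y,Y) - h^2(X,Y))$. Since $h$ is off-diagonal (see \eqref{eqn:metric.h}), we have $h(X,X) = h(Y,Y) = 0$, so the right-hand side collapses to $-\lambda\, h^2(X,Y)$. Writing $X = v^i \partial_i$ and $Y = \bar{v}^{\bar{j}} \partial_{\bar{j}}$, I would use \eqref{eqn:unnormalized.curvature} and the symmetries \eqref{eqn:R.symmetries} to express the left-hand side as $\overline{\sec}_u(X,Y) = \bar{R}(X,Y,Y,X)$, expanded in components to something proportional to $\bar{R}_{i\bar{j}\bar{k}\ell}\, v^i \bar{v}^{\bar{j}} \bar{v}^{\bar{k}} v^\ell$. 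On the other side, from \eqref{eqn:pseudo.Riemannian.metric} we have $h(X,Y) = -c_{i:\bar{j}}\, v^i \bar{v}^{\bar{j}}$, so $h^2(X,Y) = c_{i:\bar{j}} c_{\ell:\bar{k}}\, v^i v^\ell \bar{v}^{\bar{j}} \bar{v}^{\bar{k}}$.

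Next I would symmetrize. Both sides are homogeneous of degree two in $v$ and in $\bar{v}$, and both $v$ and $\bar{v}$ range over all tangent vectors, so the condition holds for all $X,Y$ of the stated form if and only if the fully symmetrized coefficients agree. The left side $\bar{R}_{i\bar{j}\bar{k}\ell}\, v^i \bar{v}^{\bar{j}} \bar{v}^{\bar{k}} v^\ell$ already picks out the part of $\bar{R}_{i\bar{j}\bar{k}\ell}$ symmetric under $i \leftrightarrow \ell$ and under $\bar{j} \leftrightarrow \bar{k}$; by the symmetries \eqref{eqn:R.symmetries} the curvature coefficient $\bar{R}_{i\bar{j}\bar{k}\ell}$ is already symmetric in these pairs, so no information is lost. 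Matching coefficients, the constant cross curvature condition with $\lambda = -4\alpha$ becomes exactly the statement that the symmetrization of $\bar{R}_{i\bar{j}\bar{k}\ell}$ (in $i\ell$ and in $\bar{j}\bar{k}$) equals $\frac{\alpha}{2}(c_{i:\bar{j}} c_{\ell:\bar{k}} + c_{i:\bar{k}} c_{\ell:\bar{j}})$, which is precisely \eqref{eqn:log.cur.expr} (whose right-hand side is already symmetric under the same two swaps). Tracking the constant, the factor of $2$ in \eqref{eqn:MTW.tensor} together with the normalization gives $-4\alpha$ rather than $-2\alpha$; I would verify this scalar bookkeeping carefully.

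The final step is coordinate-independence: the equivalence is stated for ``some (and hence any)'' coordinate system. This follows because both $\overline{\sec}_u$ and the quantity $h(X,X)h(Y,Y) - h^2(X,Y)$ in \eqref{eqn:const.nor.sect.cur} are defined intrinsically (the unnormalized sectional curvature of the tensor $\bar{R}$ against the metric $h$), so equation \eqref{eqn:const.nor.sect.cur} is a tensorial identity; if it holds in one chart it holds in all. For the concluding sentence, I would simply invoke Lemma \ref{lemma:log.geometry}, which establishes \eqref{eqn:log.cur.expr} for the logarithmic cost \eqref{eqn:alpha.cost}, so the logarithmic cost has constant cross curvature $-4\alpha$. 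The main obstacle I anticipate is the bookkeeping in the coefficient-matching step: correctly tracking the constant $-4\alpha$ (versus $-\alpha$ or $-2\alpha$) through the factor of $2$ in the curvature convention \eqref{eqn:R.bar.components} and the definition of $\overline{\sec}_u$, and making sure the symmetrization genuinely produces $c_{i:\bar{j}} c_{\ell:\bar{k}} + c_{i:\bar{k}} c_{\ell:\bar{j}}$ rather than a differently-weighted symmetric combination.
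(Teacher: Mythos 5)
Your proposal follows essentially the same route as the paper's proof: restrict to $X = v \oplus 0$, $Y = 0 \oplus \bar v$, use $h(X,X) = h(Y,Y) = 0$ to collapse the right side of \eqref{eqn:const.nor.sect.cur} to $-\lambda h^2(X,Y)$, reduce $\overline{\sec}_u(X,Y)$ to the quartic form $\bar R_{i\bar j\bar k\ell}\,v^i\bar v^{\bar j}\bar v^{\bar k}v^{\ell}$, match coefficients by polarization, and delegate the logarithmic-cost statement to Lemma \ref{lemma:log.geometry}. However, the scalar bookkeeping that you flagged as the main risk does go wrong as written. The identity in \eqref{eqn:pseudo.Riemannian.metric}, $h(v,v) = -c_{i:\bar j}a^i b^{\bar j}$, is the \emph{quadratic} form of a full vector $v = a^i\partial_i + b^{\bar i}\partial_{\bar i}$, in which both off-diagonal blocks of \eqref{eqn:metric.h} contribute and cancel the overall factor $\tfrac12$. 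For the \emph{bilinear} form with $X$ purely horizontal and $Y$ purely vertical, only one block contributes, so
\begin{equation*}
h(X,Y) = -\tfrac12\, c_{i:\bar j}\, v^i \bar v^{\bar j}, \qquad h^2(X,Y) = \tfrac14\, c_{i:\bar j} c_{\ell:\bar k}\, v^i v^{\ell} \bar v^{\bar j} \bar v^{\bar k},
\end{equation*}
a factor of $4$ away from what you wrote. With your value of $h(X,Y)$, coefficient matching against \eqref{eqn:log.cur.expr} would produce constant cross curvature $-\alpha$, not $-4\alpha$; with the corrected value one gets $\overline{\sec}_u(X,Y) = \alpha\,(c_{i:\bar j}v^i\bar v^{\bar j})^2 = 4\alpha\, h^2(X,Y) = -(-4\alpha)\, h^2(X,Y)$, as required. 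Note also that the factor of $2$ in \eqref{eqn:MTW.tensor} that you point to is irrelevant here; the discrepancy comes entirely from the $\tfrac12$ in \eqref{eqn:metric.h}. Finally, the proportionality constant in $\overline{\sec}_u(X,Y) = \bar R_{i\bar j\bar k\ell}\,v^i\bar v^{\bar j}\bar v^{\bar k}v^{\ell}$ is exactly $1$ (this is \eqref{eqn:sec.repr.const.simple} in the paper), and it must be pinned down, since the constant $-4\alpha$ is part of the lemma's content.

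A second, smaller repair concerns the polarization step in the converse direction. You need $\bar R_{i\bar j\bar k\ell}$ to be invariant under the swap $i \leftrightarrow \ell$ \emph{alone} and under $\bar j \leftrightarrow \bar k$ \emph{alone}, but this does not follow from the symmetries listed in \eqref{eqn:R.symmetries}: antisymmetry in each adjacent pair together with the pair-block swap only generate invariance under the \emph{simultaneous} swap of $(i,\ell)$ and $(\bar j,\bar k)$. The individual-swap invariance --- which is what the paper's proof asserts --- is instead manifest from the explicit formula \eqref{eqn:R.bar.components}, whose ingredients $c_{i\ell:\bar j\bar k}$, $c_{i\ell:\bar\beta}$ and $c_{\alpha:\bar j\bar k}$ are separately symmetric in $i\ell$ and in $\bar j\bar k$ (alternatively, it follows from the first Bianchi identity combined with the vanishing of the components with both barred indices in one pair). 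So that step should cite \eqref{eqn:R.bar.components} rather than \eqref{eqn:R.symmetries}. With these two corrections your argument coincides with the paper's.
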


\begin{proof}
	Fix a coordinate system $(\xi, \eta')$. Let $X, Y \in T_{(\xi, \eta')} (M \times M')$. Following the argument of \eqref{eqn:sec.bar.computation}, we have
	\begin{equation} \label{eqn:sec.repr.const}
		\overline{\sec}_{u}(X,Y) = \bar{R}_{i\bar{j}\bar{k}l} \left( x^iy^{\bar{j}}y^{\bar{k}}x^l + x^{\bar{j}}y^{i}x^{\bar{k}}y^l - x^iy^{\bar{j}}x^{\bar{k}}y^l - x^{\bar{j}}y^{i}y^{\bar{k}}x^l\right).
	\end{equation}
	
	Suppose $X  = (x^i \partial_i) \oplus 0$ and $Y  =  0 \oplus (y^{\bar{i}} \partial_{\bar{i}}) \in T_{(\xi, \eta')} (M \times M')$. Since $x^{\bar{i}} = 0$ and $y^i = 0$, \eqref{eqn:sec.repr.const} gives
	\begin{equation} \label{eqn:sec.repr.const.simple}
		\overline{\sec}_{u}(X,Y) = \bar{R}_{i\bar{j}\bar{k}l}x^iy^{\bar{j}}y^{\bar{k}}x^l.
	\end{equation}
	
	Suppose $c$ satisfies $\eqref{eqn:log.cur.expr}$. Then \eqref{eqn:sec.repr.const.simple} implies that
	\[
	\overline{\sec}_{u}(X,Y) = \frac{\alpha}{2} \left( c_{i\bar{j}} c_{l\bar{k}} + c_{i\bar{k}}c_{l\bar{j}}\right) x^i y^{\bar{j}} y^{\bar{k}} x^l = -4\alpha \cdot h^2(X,Y).
	\]
	Thus $c$ has constant cross curvature $-4\alpha$ on $TM \wedge TM'$ as $h(X,X) = 0 = h(Y,Y)$.
	
	Conversely, suppose $c$ has constant cross curvature $-4\alpha$ on $TM \wedge TM'$. Then
	\begin{equation} \label{eqn:sec.repr.const.simple.equi}
		\overline{\sec}_{u}(X,Y) = 4\alpha \cdot h^2(X,Y) = \frac{\alpha}{2} \left( c_{i\bar{j}} c_{l\bar{k}} + c_{i\bar{k}}c_{l\bar{j}}\right) x^i y^{\bar{j}} y^{\bar{k}} x^l.
	\end{equation}
	Note that both coefficients $\bar{R}_{i\bar{j}\bar{k}l}$ and $\frac{\alpha}{2} \left( c_{i\bar{j}} c_{l\bar{k}} + c_{i\bar{k}}c_{l\bar{j}}\right)$ are invariant under the swap of indices $i$ and $l$ or the swap of $\bar{j}$ and $\bar{k}$. Since \eqref{eqn:sec.repr.const.simple} and \eqref{eqn:sec.repr.const.simple.equi} holds for arbitrary choice of $x^i$, $x^l$ and $y^{\bar{j}}$, $y^{\bar{k}}$, the identity \eqref{eqn:log.cur.expr} must hold.
\end{proof}

\subsection{Consequences of constant cross curvature}
Now we show that if $c$ has constant cross curvature, then the statistical manifolds it generates have constant information-geometric sectional curvature.

\begin{theorem} \label{thm:const.cur}
	Suppose the cost function $c$ has constant cross curvature $-4\alpha$ on $TM \wedge TM'$. Then any graph $G$ of optimal transport has constant information-geometric sectional curvature $-\alpha$. Consequently, $c$ has constant sectional curvature $-\alpha$ on $G$.
\end{theorem}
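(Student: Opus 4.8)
The plan is to reduce everything to the explicit component identity for the curvature and then to exploit the fact that tangency to $G$ collapses the relevant contractions into metric quantities. By Lemma \ref{lemma:const.sec.cur}, the hypothesis of constant cross curvature $-4\alpha$ is equivalent to the pointwise identity $\bar{R}_{i\bar{j}\bar{k}l} = \frac{\alpha}{2}(c_{i:\bar{j}} c_{l:\bar{k}} + c_{i:\bar{k}} c_{l:\bar{j}})$ in any coordinate system. I would take this as the starting point, working with components rather than with the abstract curvature operator.

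First I would compute $\sec_u(X,Y)$ for $X,Y$ tangent to $G$ by substituting this expression for $\bar{R}$ into the representation \eqref{eqn:sec.identity1} established in the proof of Theorem \ref{thm:average.sec}. Writing $X = x^i \partial_i + x^{\bar{i}} \partial_{\bar{i}}$ and similarly for $Y$, the substitution produces a sum of products of the four scalar contractions $c_{i:\bar{j}} x^i x^{\bar{j}}$, $c_{i:\bar{j}} y^i y^{\bar{j}}$, $c_{i:\bar{j}} x^i y^{\bar{j}}$ and $c_{i:\bar{j}} y^i x^{\bar{j}}$. The crucial step is then to invoke the tangency relations $x^{\bar{i}} = x^m \frac{\partial \eta^{\bar{i}}}{\partial \xi^m}$ (and likewise for $Y$) together with the metric formula $g_{ij} = -c_{i:\bar{m}} \frac{\partial \eta^{\bar{m}}}{\partial \xi^j}$ from \eqref{eqn:c.divergence.coefficients}. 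These identities show that all four contractions reduce to metric pairings; in particular the two mixed contractions $c_{i:\bar{j}} x^i y^{\bar{j}}$ and $c_{i:\bar{j}} y^i x^{\bar{j}}$ both equal $-g(X,Y)$, while the two pure contractions equal $-g(X,X)$ and $-g(Y,Y)$. After this substitution the expression simplifies to exactly $-\alpha(g(X,X) g(Y,Y) - g(X,Y)^2)$.

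I would then repeat the same argument verbatim in the dual coordinates, using \eqref{eqn:sec.identity2} in place of \eqref{eqn:sec.identity1}, to obtain $\sec_u^*(X,Y) = -\alpha(g(X,X)g(Y,Y) - g(X,Y)^2)$. Together these two identities are precisely the statement \eqref{eqn:info.const.curvature} that $G$ has constant information-geometric sectional curvature $-\alpha$. The final claim that $c$ has constant sectional curvature $-\alpha$ on $G$ then follows immediately by averaging via Theorem \ref{thm:average.sec}, since $\overline{\sec}_u = \frac{1}{2}(\sec_u + \sec_u^*)$ and $h|_G = g$ by Theorem \ref{thm:metric}.

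The step I expect to be the main obstacle is the bookkeeping in contracting $\bar{R}$ against the tangent vectors: one must track, in each factor $c_{i:\bar{j}}$, which index is paired with a primal component and which with a dual component, and it is only after imposing tangency that the apparently distinct mixed terms coincide. This collapse (the equality of the two mixed contractions) is exactly what forces the result to be proportional to the Gram determinant $g(X,X)g(Y,Y) - g(X,Y)^2$ with the single constant $-\alpha$; without the tangency constraint the analogous combination would not be a multiple of this determinant.
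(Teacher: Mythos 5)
Your proposal is correct and follows essentially the same route as the paper's own proof: invoke Lemma \ref{lemma:const.sec.cur} to convert the constant cross curvature hypothesis into the component identity \eqref{eqn:log.cur.expr}, substitute it into the representations \eqref{eqn:sec.identity1} and \eqref{eqn:sec.identity2} from the proof of Theorem \ref{thm:average.sec}, use tangency and the metric formula in \eqref{eqn:c.divergence.coefficients} to collapse the contractions into $-g(X,X)$, $-g(Y,Y)$, $-g(X,Y)$, and finish by Theorem \ref{thm:average.sec}. The details you supply (including the equality of the two mixed contractions, which uses the symmetry of $g$) are exactly the bookkeeping the paper leaves implicit, and they check out.
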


\begin{proof}
	By Lemma \ref{lemma:const.sec.cur}, $c$ satisfies \eqref{eqn:log.cur.expr} in some coordinate system $(\xi, \eta')$. Let $G$ be a graph of optimal transport, and let $X$, $Y$ be tangent to $G$. By \eqref{eqn:sec.identity1}, we have
	\begin{align*}
		\sec_u(X, Y) &= \alpha \left( \left( c_{j\bar{\alpha}} c_{k\bar{\beta}} + c_{j\bar{\beta}}c_{k\bar{\alpha}}\right) x^{\bar{\alpha}} y^j y^k x^{\bar{\beta}} - \left( c_{i\bar{\alpha}} c_{k\bar{\beta}} + c_{i\bar{\beta}}c_{k\bar{\alpha}}\right) x^i y^{\bar{\alpha}} y^{k} x^{\bar{\beta}} \right) \\
		&= -\alpha \left( g(X,X)g(Y,Y) - g^2(X,Y)\right).
	\end{align*}
	Hence $G$ has constant information-geometric primal sectional curvature $-\alpha$. From \eqref{eqn:sec.identity2}, the same holds for the dual sectional curvature. The last statement follows from Theorem \ref{thm:average.sec}.
\end{proof}

\begin{remark}
	It is clear that if $c$ has constant cross curvature, then $c$ is weakly regular (see Remark \ref{rmk:MTW}). In fact, for $X \wedge Y \in TM \wedge TM'$, if $h(X,Y) = 0$ (i.e. $X \oplus Y$ is a null vector in the sense of \cite{KM10}) then the cross curvature of the cost function $c$ in Theorem \ref{thm:const.cur} vanishes everywhere. Thus the MTW tensor is identically zero. This recovers the recent result of Khan and Zhang (see \cite[p.22]{KZ19}). Also, Theorem \ref{thm:const.cur} recovers \cite[Theorem 18]{W18} which shows that the $L^{(\alpha)}$-divergence induces constant information geometric sectional curvature $-\alpha$.
\end{remark}

\begin{remark}
	It is interesting to know if some converse of Theorem \ref{thm:const.cur} holds: if all graphs of optimal transport have constant information geometric sectional curvature, does the corresponding cost also have constant cross curvature? Is the logarithmic cost (up to reparameterization and linear terms) the unique cost which has constant cross curvature?
\end{remark}

Now we specialize the above results to the $L^{(\alpha)}$-divergence to give an intrinsic interpretation of its information geometric sectional curvature. It is intrinsic because if a statistical manifold is dually projectively flat with constant sectional curvature $-\alpha$, then locally one can define canonically a divergence of $L^{(\alpha)}$-type which is consistent with the ambient geometry \cite[Theorem 19]{W18}.  See \cite{WY19} for more discussion and related results.

\begin{corollary} \label{cor:L.alpha.curvature.interpretation}
	Let ${\bf D}$ be the $L^{(\alpha)}$-divergence which is the $c$-divergence of the logarithmic cost \eqref{eqn:alpha.cost}. Consider the context of Theorem \ref{thm:divergence.derivative.s2t2}, so that $\gamma(s)$ is a primal geodesic, $\sigma(t)$ is a dual geodesic and $\gamma(0) = \sigma(0)$, we have
	\begin{equation} \label{eqn:curvature.interpretation.L.alpha}
		\left. \frac{\partial^2}{\partial s^2\partial t^2} {\bf D}[\gamma(s) : \sigma(t)] \right|_{s = t = 0} = -2 \alpha g^2(\dot{\gamma}(0), \dot{\sigma}(0)).
	\end{equation}
\end{corollary}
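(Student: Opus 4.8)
The plan is to combine Theorem~\ref{thm:divergence.derivative.s2t2} with the constant cross curvature of the logarithmic cost, and then to convert the resulting unnormalized cross sectional curvature into the Riemannian quantity $g^2(\dot\gamma(0),\dot\sigma(0))$. Since the $L^{(\alpha)}$-divergence is exactly the $c$-divergence of the logarithmic cost \eqref{eqn:alpha.cost} (Example~\ref{ex:L.alpha}), Lemma~\ref{lemma:const.sec.cur} shows that this cost has constant cross curvature $-4\alpha$ on $TM\wedge TM'$. With the one-sided vectors $X=\dot p(0)\oplus 0$ and $Y=0\oplus\dot q'(0)$ of Theorem~\ref{thm:divergence.derivative.s2t2}, the off-diagonal block form \eqref{eqn:metric.h} of $h$ forces $h(X,X)=h(Y,Y)=0$, so Definition~\ref{def:const.curvature}(i) collapses to $\overline{\sec}_u(X,Y)=-4\alpha\bigl(-h^2(X,Y)\bigr)=4\alpha\,h^2(X,Y)$.

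The essential step is then to identify $h(X,Y)$ with a multiple of $g(\dot\gamma(0),\dot\sigma(0))$. I would stress the distinction that $X$ and $Y$ are merely the horizontal and vertical components $\iota_0(\dot\gamma(0))$ and $\iota_1(\dot\sigma(0))$ and are \emph{not} tangent to $G$, whereas $\dot\gamma(0)$ and $\dot\sigma(0)$ are the full $G$-tangent velocities. Polarizing \eqref{eqn:pseudo.Riemannian.metric} gives $h(X,Y)=-\tfrac12 c_{i:\bar j}\,\dot p^i(0)\,\dot q'^{\bar j}(0)$. On the other hand, writing both geodesic velocities in the primal coordinates on $G$ — so that $\dot\sigma(0)$ has primal components $\tfrac{\partial\xi}{\partial\eta}\,\dot q'(0)$ — and inserting $g_{ij}=-c_{i:\bar m}\,\partial\eta^{\bar m}/\partial\xi^j$ from Lemma~\ref{lem:info.geo.coefficients}, the chain-rule cancellation $\tfrac{\partial\eta^{\bar m}}{\partial\xi^j}\tfrac{\partial\xi^j}{\partial\eta^{\bar k}}=\delta^{\bar m}_{\bar k}$ reduces $g(\dot\gamma(0),\dot\sigma(0))$ to $-c_{i:\bar k}\,\dot p^i(0)\,\dot q'^{\bar k}(0)=2\,h(X,Y)$ (equivalently, this is a consequence of Theorem~\ref{thm:metric} applied to the $G$-tangent velocities).

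Putting the pieces together, $h^2(X,Y)=\tfrac14 g^2(\dot\gamma(0),\dot\sigma(0))$, hence $\overline{\sec}_u(X,Y)=\alpha\,g^2(\dot\gamma(0),\dot\sigma(0))$, and Theorem~\ref{thm:divergence.derivative.s2t2} delivers $\left.\tfrac{\partial^4}{\partial s^2\partial t^2}\mathbf{D}[\gamma(s):\sigma(t)]\right|_{s=t=0}=-2\overline{\sec}_u(X,Y)=-2\alpha\,g^2(\dot\gamma(0),\dot\sigma(0))$, which is the assertion. I expect no genuine obstacle here beyond careful bookkeeping: the only thing to watch is the factor-of-two relating the one-sided vectors $X,Y$ to the $G$-tangent vectors $\dot\gamma(0),\dot\sigma(0)$, which is precisely what reconciles the cross-curvature constant $-4\alpha$ with the target constant $-2\alpha$ once the identity $h(X,Y)=\tfrac12 g(\dot\gamma(0),\dot\sigma(0))$ is in hand.
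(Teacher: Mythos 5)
Your proposal is correct and follows essentially the same route as the paper's proof: invoke Theorem \ref{thm:divergence.derivative.s2t2} to get $-2\overline{\sec}_u(X,Y)$, use the constant cross curvature $-4\alpha$ of the logarithmic cost to write $\overline{\sec}_u(X,Y)=4\alpha\,h^2(X,Y)$, and then establish the key identity $g(\dot{\gamma}(0),\dot{\sigma}(0))=-c_{i:\bar{j}}\,u^i\bar{v}^{\bar{j}}=2\,h(X,Y)$ via Lemma \ref{lem:info.geo.coefficients} in primal coordinates. The only (cosmetic) difference is that you cite Lemma \ref{lemma:const.sec.cur} for the constant cross curvature where the paper cites Theorem \ref{thm:const.cur}, which is if anything the more precise reference.
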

\begin{proof}
	Following the notations Theorem \ref{thm:divergence.derivative.s2t2}, Let $X = \dot{p}(0) \oplus 0$ and $0 \oplus \dot{q}'(0)$. By Theorem \ref{thm:divergence.derivative.s2t2}, we have
	\[
	\left. \frac{\partial^2}{\partial s^2\partial t^2} {\bf D}[\gamma(s) : \sigma(t)] \right|_{s = t = 0} = -2 \overline{\sec}_x(X, Y).
	\]
	Since the logarithmic cost has constant cross curvature $-4\alpha$ by Theorem \ref{thm:const.cur}, we have
	\begin{equation} \label{eqn:first.computation}
		\overline{\sec}_x(X, Y) = 4\alpha h^2(X, Y).
	\end{equation}
	
	Consider the coordinates $(\xi, \eta')$. Writing $X = u^i \partial_i \oplus 0$ and $Y = 0 \oplus \bar{v}^{\bar{j}} \partial_{\bar{j}}$. From \eqref{eqn:pseudo.Riemannian.metric}, we have
	\[
	h(X, Y) = \frac{-1}{2} c_{i:\bar{j}} u^i \bar{v}^{\bar{j}}.
	\]
	
	Now consider $\gamma$ and $\sigma$ as curves in $G$. Using the primal coordinate on $G$, we have
	\[
	\dot{\gamma}(0) = u^i \frac{\partial}{\partial \xi^i}, \quad \dot{\sigma}(0) = \bar{v}^{\bar{j}} \frac{\partial \xi^j}{\partial \eta^{\bar{j}}} \frac{\partial}{\partial \xi^j}.
	\]
	By Lemma \ref{lem:info.geo.coefficients}, we have
	\[
	g(\dot{\gamma}(0), \dot{\sigma}(0)) = -c_{i : \bar{m}} \frac{\partial \eta^{\bar{m}}}{\partial \xi^j} u^i \bar{v}^{\bar{j}} \frac{\partial \xi^j}{\partial \eta^{\bar{j}}} = -c_{i: \bar{j}} u^i \bar{v}^{\bar{j}}.
	\]
	Plugging this into \eqref{eqn:first.computation}, we obtain the desired result.
\end{proof}

\section{Conclusion} \label{sec:conclusion}
This paper uncovers a fundamental relation between optimal transport and information geometry, and we expect that this framework will be useful for extending results in information geometry using optimal transport and vice versa. Here we discuss several directions for further study.

In this paper we considered the basic Monge-Kantorovich optimal transport problem. We showed that a divergence can be associated to an optimal transport map (e.g.~the quadratic distance $\frac{1}{2}|x - y|^2$ corresponds to the identity transport for the quadratic transport problem). Thus, the choice of a divergence, or loss function, in a specific application  may be justifiable in terms of ideas from optimal transport. An interesting direction is to extend the pseudo-Riemannian framework and the results of this paper to the entropically relaxed transport problem
\begin{equation} \label{eqn:entropically.relaxed}
\mathcal{T}_{c, h}(\mu, \nu) := \inf_{\gamma \in \Pi(\mu, \nu)} \left( \int_{M \times M'} c d \gamma + h \mathrm{Ent}(\gamma) \right),
\end{equation}
where which $\mathrm{Ent}(\gamma)$ is the entropy of $\gamma$ and $h > 0$ (see Example \ref{eg:entropic} which interprets a modified Sinkhorn divergence as a $c$-divergence). The entropically relaxed transport problem is closely related to the Schr\"{o}dinger problem \cite{L13}. Note that the optimal coupling in \eqref{eqn:entropically.relaxed} is no longer concentrated on a graph, but as $h \rightarrow 0$ it converges to the optimal coupling in \eqref{eqn:MK.problem}. Thus, the corresponding information geometry, if any, lives on the product space $M \times M'$ rather than a submanifold $G$. This is close in spirit to \cite{AKO18} which considers the statistical manifold of optimal couplings. The recent paper \cite{P19} studies the limit of \eqref{eqn:entropically.relaxed} as $h \rightarrow 0$ and uses the $c$-divergence in a crucial way. It is natural to ask whether the quantities obtained in \cite{P19}, including a probabilistic approximation of the Schr\"{o}dinger bridge, can be understood geometrically. Another possible direction is to consider the geometry of dynamic optimal transport problems where the coupling is replaced by the law of a stochastic process, say $(X_t)_{0 \leq t \leq 1}$, with initial distribution $X_0 \sim \mu$ and final distribution $X_1 \sim \nu$. We believe that an improved understanding of these problems will be helpful in statistical applications of optimal transport and information geometry.

In \cite{WY19} and in Section \ref{sec:constant.curvature} we studied the geometric meaning of information-geometric curvature in the case of constant sectional curvature. It is desirable to extend this result to arbitrary statistical manifolds. While Theorem \ref{thm:divergence.derivative.s2t2} relates the time derivative of the divergence ${\bf D}[\gamma(s) : \sigma(t)]$ to the unnormalized sectional curvature $\overline{\mathrm{sec}}_u$, it is not intrinsic as there are infinitely many pseudo-Riemannian geometries which are compatible with a given dualistic structure $(G, g, \nabla, \nabla^*)$. The inverse problem (of constructing $h$, $c$ and ${\bf D}$) is related to the construction of {\it canonical divergence} \cite{AA15} in information geometry; see also \cite{FA18, FA18b, FA19}. Finally, let us remark that optimal transport problems such as the reflector antenna problem \cite{L11} and the $2$-Wasserstein transport on Riemannian manifolds may lead to interesting new examples of statistical manifolds and divergences.

\begin{acknowledgements}
Leonard Wong would like to thank Robert McCann, Jun Zhang and Soumik Pal for helpful conversations and comments. Jiaowen Yang would like to thank Shanghua Teng and Francis Bonahon for helpful discussions. Most of the work was done when he was a student at the University of Southern California. We also thank the anonymous reviewers for their careful reading and comments.
\end{acknowledgements}

%
\section*{Conflict of interest}
The authors declare that they have no conflict of interest.

\bibliographystyle{plain}      
\bibliography{geometry.ref}

\end{document}